\newcommand{\xycenterm}[2][=2em]{\vcenter{\hbox{\xymatrix@#1{#2}}}}
\newcommand{\xycentert}[2][=2em]{\[\xycenterm[#1]{#2}\]}
\title{Exact completion and constructive theories of sets}
\author{Jacopo Emmenegger}
\address{Department of Mathematics, Stockholm University, Sweden.}
\curraddr{DIMA, Università degli Studi di Genova, via Dodecaneso 35, 16146 Genova, Italy}
\email{jacopo.emmenegger@edu.unige.it}
\author{Erik Palmgren$^\dagger$}
\address{Department of mathematics, Stockholm University, Sweden.}
\subjclass[2010]{03B15; 18B05; 18D15; 03F55; 03G30; 18A35}
\keywords{Setoids, exact completion, local cartesian closure, constructive set theory, categorical logic}
\thanks{{\itshape Acknowledgements.}
The research presented in this paper was supported by the VR grant 2015-03835 from the Swedish Research Council,
and presented at the Logic Colloquium in Leeds in August 2016 and at the XXVI AILA meeting in Padua in September 2017.
We thank the organisers of both events for giving us the opportunity to present our work.
The first author gratefully acknowledge support from the Association of Symbolic Logic
and the G.S.\ Magnusons Foundation to participate in the first event.
We also thank the anonymous referee for a careful reading
and useful comments that improved the shape of the paper.}
\thanks{$^\dagger$ 1963--2019}
\begin{document}

\begin{abstract}
In the present paper we use the theory of exact completions to study categorical properties of small setoids in \mltt and,
more generally, of models of the Constructive Elementary Theory of the Category of Sets,
in terms of properties of their subcategories of choice objects (\ie objects satisfying the axiom of choice).
Because of these intended applications, we deal with categories that lack equalisers and just have weak ones,
but whose objects can be regarded as collections of global elements.
In this context, we study the internal logic of the categories involved,
and employ this analysis to give a sufficient condition for the local cartesian closure of an exact completion.
Finally, we apply this result to show when an exact completion produces a model of CETCS.
\end{abstract}

\maketitle

\section{Introduction}
\label{sec:intro}

Following a tradition initiated by Bishop~\cite{Bi1967},
the constructive notion of set is taken to be a collection of elements together
with an equivalence relation on it, seen as the equality of the set.
In \mltt this is realised with the notion of setoid,
which consists of a type together with a type-theoretic equivalence relation on it~\cite{PaWi2014}.
An ancestor of this construction can be found
in Gandy's interpretation of the extensional theory of simple types into the intensional one~\cite{Ga1956}.
A category-theoretic counterpart is provided by the exact completion construction \exct,
which freely adds quotients of equivalence relations to a category \catct with (weak) finite limits~\cite{CaMa1982,CaVi1998}.
As shown by Robinson and Rosolini, and further clarified by Carboni,
the effective topos can be obtained using this construction~\cite{RoRo1990,Ca1995}.
The authors of~\cite{BCRS1998} then advocated the use of exact completions as an abstract framework
where to study properties of categories of partial equivalence relations,
which are widely used in the semantics of programming languages.
For these reasons, this construction has been extensively studied and has a robust theory~\cite{GrVi1998,CaRo2000,vdB2005,vdBMo2008},
at least when \catct has finite limits,
whereas its behaviour is less understood when \catct is only assumed to have weak finite limits.

The relevance of the latter case comes from the fact that
setoids in \mltt arise as the exact completion of the category of closed types,
which does have finite products but only weak equalisers (what we shall call a \qcart category),
meaning that a universal arrow exists but not necessarily uniquely.
However, this category of types has some other features:
it validates the axiom of choice
and it has a proof-relevant internal logic with a strong existential quantifier.
These features have been investigated by the second author in~\cite{Pa2004},
where this internal logic is called categorical BHK-interpretation.

More generally, the same situation arises for any model of the Constructive Elementary Theory of the Category of Sets (CETCS),
a first order theory introduced by the second author in~\cite{Pa2012a}
in order to formalise properties of the category of sets in the informal set theory used by Bishop.
In fact, this theory provides a finite axiomatisation of the theory of
well-pointed locally cartesian closed pretoposes with enough projectives and a \nno.
Therefore, any model \catet of CETCS is the exact completion of its projective objects,
which form a \qcart category \catpt.
As for closed types in \mltt, these are objects satisfying a categorical version of the axiom of choice, and
the internal logic of \catet on the projectives is (isomorphic to) the categorical BHK-interpretation
of intuitionistic first order logic in \catpt.

The aim of the present paper is to isolate certain properties of a \qcart category \catct
that will ensure that its exact completion is a model of CETCS
while, at the same time, making sure that these properties are satisfied by the category of closed types in \mltt.
In fact, for some of the properties defining a model \catet of CETCS, an equivalent formulation in terms of projectives of \catet
is already known, as in the case of pretoposes~\cite{GrVi1998},
or follows easily from known results, as for \nnos~\cite{Ca1995,BCRS1998}.
However, in the general case of weak finite limits (or just \qcart categories),
a complete characterisation of local cartesian closure in terms of a property of the projectives is still missing.

The first contribution of this paper consists of a condition on a category
which is sufficient for the local cartesian closure of its exact completion.
This condition is a categorical formulation of Aczel's Fullness Axiom
from Constructive Zermelo-Fraenkel set theory (CZF)~\cite{Ac1978,AcRa2001},
and it is satisfied by the category of closed types.
A complete characterisation of local cartesian closure
for an exact completion is given by Carboni and Rosolini in~\cite{CaRo2000},
but it has been recently discovered that the argument used
requires the projectives to be closed under finite limits~\cite{Em2018a}.
Another sufficient condition, which applies to those exact completions arising from certain homotopy categories,
has been recently given by van den Berg and Moerdijk~\cite{vdBMo2018}.
We formulate our notion of Fullness, and the proof of local cartesian closure, in the context of well-pointed \qcart categories
in order to match some aspects of set theory, like extensionality.
However, a suitably generalised version of our formulation of Fullness in fact
reduces to Carboni and Rosolini's characterisation in the presence of finite limits,
and is tightly related to van den Berg and Moerdijk's condition as well~\cite{Em2018b}.

In CZF minus Subset Collection, the Fullness Axiom is equivalent to Subset Collection.
Hence it is instrumental in the construction of Dedekind real numbers in CZF and it implies Exponentiation~\cite{AcRa2001}.
The axiom states the existence of a full set $ F $ of total relations (\ie multi-valued functions) from a set $ A $ to a set $ B $,
where a set $ F $ is full if every total relation from $ A $ to $ B $ has a subrelation in $ F $,
\ie if $ F \subseteq \text{TR}(A,B) $ and
\[
  \forall R \in \text{TR}(A,B) \	\exists S \in F \	S \subseteq R,
\]
where $ \text{TR}(A,B) \coloneqq \Set{ R \subseteq A \times B | \forall a \in A \, \exists b \in B \ (a,b) \in R } $
is the class of total relations from $ A $ to $ B $.
Since functional relations are minimal among total relations, a full set must contain all graphs of functions,
however it is not a (weak) exponential as it may also contain non-functional relations.
We shall use a characterisation of local cartesian closure
in terms of closure under families of partial functional relations as in~\cite{Pa2012a} and,
similarly, we shall formulate a version of the Fullness Axiom in terms of families of partial pseudo-relations (\ie non-monic relations).
The key aspect of the proof is the very general universal property of a full set (or of a full family of partial pseudo-relations),
which endows the internal (proof-relevant) logic with implication and universal quantification.

The second contribution of the paper is a complete characterisation of well-pointed exact completions in terms of their projectives.
We relate well-pointedness, which amounts to extensionality with respect to global elements, with certain choice principles,
namely versions of the axiom of unique choice in \exct and the axiom of choice in \catct.
We also exploit this correspondence to simplify the internal logic of the categories under consideration,
and the exact completion construction itself.
In the related context of quotient completions of elementary doctrines,
an analogous result relating choice principles is obtained by Maietti and Rosolini in~\cite{MaRo2016}.

The paper is understood as being formulated in an essentially algebraic theory for category theory over intuitionistic first order logic,
as the one presented in~\cite{Pa2012a}.
However,
we believe that all the results herein can be formalised in intensional \mltt using E-categories~\cite{PaWi2014},
and this is indeed the case for those regarding the category of setoids.
A step towards this goal is made in~\cite{Pa2016},
where CETCS is formulated in a dependently typed first-order logic,
which can be straightforwardly interpreted in \mltt.

The paper is organised as follows.
In \cref{sec:excat} we recall the basic category-theoretic concepts needed in the paper
and provide a brief overview of already known facts about the exact completion construction.

In \cref{sec:elemcat} we consider the concept of elemental category,
which is needed to formulate the constructive version of well-pointedness satisfied by models of CETCS,
and which allows to regard objects as collections of (global) elements.
Indeed, in abstract categorical terminology, it amounts to say that the global section functor is conservative,
however we avoid this formulation since it refers to the category of sets,
and prefer an elementary definition instead.
The main result of this section is a characterisation of elemental exact completions
as those arising from categories satisfying a version of the axiom of choice.

\cref{sec:closure} contains the main result of the paper,
namely our categorical formulation of Aczel's Fullness Axiom
and the proof that it implies the local cartesian closure of the exact completion.
In this section we fully exploit the simplifications of the internal logic
and the exact completion construction given by elementality,
as well as the proof relevance of the internal logic given by the BHK-interpretation.

In \cref{sec:cetcs} we recall the  axioms of CETCS from~\cite{Pa2012a}
and discuss how its models are exact completions of their choice objects (\ie projective objects).
We then use the results from the previous sections, and already known ones,
to show when an exact completion produces a model of CETCS.

Finally, \cref{sec:setoids} treats the case of setoids in \mltt.
Here we recall the main concepts and definitions 
and show how to apply results from the previous section to obtain a compact and uniform proof
that the category of setoids is a model of CETCS.

\section{Exact and \qcart categories}
\label{sec:excat}

An \emph{equivalence relation} in a category \catct with finite limits
is a subobject $ r \colon R \mono X \times X $ such that there are
(necessarily unique) arrows witnessing reflexivity, symmetry and transitivity as in the following diagrams
\begin{equation}
\label{eq:eqrel}
\xycenterm[C=3em]{
						&	R	\ar[d]^-r	&%
							&	R	\ar[d]^-r	&%
									&	R	\ar[d]^-r	\\
	X	\ar[r]_-{\diag_X} \ar@{.>}[ur]^-{\rho}	&	X \times X	&%
	R	\ar[r]_-{\pbk{r_2,r_1}} \ar@{.>}[ur]^-{\sigma}	&	X \times X	&%
	R \times_X R	\ar[r]_-{\pbk{r_1 p_1, r_2 p_2}} \ar@{.>}[ur]^-{\tau}	&	X \times X	}
\end{equation}
where $ R \xleftarrow{\,p_1\,} R \times_X R \xrightarrow{\,p_2\,} R $
is a pullback of $ R \xrightarrow{\,r_2\,} X \xleftarrow{\,r_1\,} R $.
Here and in the rest of the paper we denote by $ f_i $ the $ i $-th component of an arrow $ f $ into a product.
Subobjects obtained by pulling back an arrow along itself are always equivalence relations,
these are called \emph{kernel pairs}.
A diagram of the form $ R \psrel X \to Y $ is \emph{exact}
if it is a coequaliser diagram and $ R \psrel X $ is the kernel pair of $ X \to Y $.
In such a situation,
the regular epi $ X \to Y $ is called \emph{quotient} of the equivalence relation $ R \mono X \times X $.

\begin{defin}
A category is \emph{exact} if it has finite limits,
and pullback-stable quotients of equivalence relations.
An exact category is a \emph{pretopos} if it has disjoint and pullback-stable finite sums,
and the initial object is strict.
\end{defin}

In an exact category,
regular epis are quotients of their kernel pair
and they coincide with the simpler notion of covers:
an arrow $ f $ is a \emph{cover} if,
whenever it factors as $ f = gh $ with $ g $ monic, then $ g $ is in fact an iso.
In a pretopos regular epis also coincide with epis.

We say that an object P is \emph{projective} if,
for every cover $ X \to Y $ and every arrow $ P \to Y $,
there is $ P \to X $ such that the obvious triangle commutes.

\begin{defin}
A \emph{projective cover} of an object $ X \in \catcm $ is given by a projective object $ P $ and a cover $ P \to X $.
A \emph{projective cover} of \catct is a full subcategory \catpt of projective objects such that
every $ X \in \catcm $ has a projective cover $ P \in \catpm $.
\catct has \emph{enough projectives} if it has a projective cover.
\end{defin}

The property of having enough projectives corresponds to the set-theoretic principle known as
Presentation Axiom~\cite{Ac1978,AcRa2001},
which states that every set is the image of a choice set (or base),
\ie a set for which the axiom of choice holds.

Projective covers are not necessarily closed under limits that may exist in \catct.
However they do have a weak limit of every diagram that has a limit in \catct~\cite{CaVi1998},
where a weak limit is defined in the same way as a limit
but dropping uniqueness of the universal arrow.
Indeed, if $ L \in \catcm $ is a limit in \catct of a diagram $ \mathcal{D} $ in \catpt,
then any projective cover $ P \in \catpm $ of $ L $ is a weak limit of $ \mathcal{D} $ in \catpt:
given any cone over $ \mathcal{D} $ with vertex $ Q \in \catpm $, the weak universal $ Q \to P $ is obtained
lifting the universal $ Q \to L $ along the cover $ P \to L $ using projectivity of $ Q $.

Nevertheless, in the rest of the paper we shall be interested in subcategories of projectives which are closed under finite products,
so we introduce the following definition.

\begin{defin}
A category is \emph{\qcart} if it has finite products and weak equalisers.
\end{defin}

\begin{rem}
As for the case of limits,
a category with finite products has all weak finite limits \iff it has weak equalisers \iff it has weak pullbacks.
\end{rem}

\begin{rem}
\label{rem:bhklog}
\Qcart categories come naturally equipped with a proof-relevant internal logic.
This interpretation has been investigated in detail by the second author in~\cite{Pa2004},
where it is called \emph{categorical BHK-interpretation}
due to its similarities to the propositions-as-types correspondence.
Lawvere gave a description of it in the case of locally cartesian closed categories
as an instance of what he calls the Curry-L\"auchli adjoint,
\ie the adjunction between the bicategory of posets and the bicategory of categories~\cite{La1996}.
This logic may also be understood in terms of hyperdoctrines~\cite{La1969},
and this perspective has been developed by Grandis~\cite{Gr1997,Gr2000}
and more recently by Maietti and Rosolini~\cite{MaRo2013a,MaRo2013b}
with the name ``weak subobjects''.
Since this internal logic will be one of the main tool in the proof of our main result,
we briefly review it here.

Recall that, given two arrows $ f \colon Y \to X $ and $ g \colon Z \to X $,
$ f \leq g $ means that there is $ h \colon Y \to Z $ such that $ g h = f $.
This defines a preorder on $ \catcm/X $,
we denote by $ \presubc(X) $ its order reflection
and, following~\cite{Pa2004}, call its elements \emph{presubobjects}.
Presubobjects are used for the interpretation of predicates.
Since weak limits are unique up to presubobject equivalence,
weak pullbacks can be used to interpret weakening and substitution.
For the same reason, we can interpret equality with weak equalisers and conjunction with weak pullbacks,
while postcomposition provides an interpretation for the existential quantifier.
Hence regular logic has a sound interpretation into any \qcart category.
\end{rem}

As shown by Carboni and Vitale~\cite{CaVi1998},
any category with weak finite limits \catct can be regarded as a projective cover of an exact category,
known as the exact completion of \catct.
This construction consists in freely adding quotients of pseudo-equivalence relations
and we now describe it in the case of a \qcart category \catct.

Objects of the \emph{exact completion} \exct are pseudo-equivalence relations in \catct,
that is arrows $ r \colon R \to X \times X $
such that there are (not necessarily unique) arrows for reflexivity, symmetry and transitivity as in \eqref{eq:eqrel},
where now the domain of $ \tau $ is just a weak pullback of $ r_1 $ and $ r_2 $.
Arrows from $ R \to X \times X $ to $ S \to Y \times Y $ in \exct are
arrows $ f \colon X \to Y $ in \catct
such that there is $ \hat{f} \colon R \to S $ making the left-hand diagram below commute,
and where $ f, g \colon X \to Y $ are identified in \exct
if there is $ h \colon X \to S $ making the right-hand diagram below commute.
%
\xycentert[C=3em]{
	R	\ar[d] \ar[r]^-{\hat{f}}		&	S	\ar[d]	&&%
													&	S	\ar[d]	\\
	X \times X	\ar[r]^-{f \times f}	&	Y \times Y	&&%
							X	\ar@/^/[ur]^-h \ar[r]^-{\pbk{f,g}}	&	Y \times Y	}

The functor $ \Gamma \colon \catcm \to \excm $ mapping an object $ X \in \catcm $ to the diagonal on $ X $
is full and faithful and preserves all the finite limits which exist in \catct.
The image of this embedding is a projective cover of \exct,
and every exact category with enough projectives is in fact an exact completion.
In the case of \qcart categories, this characterisation assumes the following form.

\begin{theor}[\cite{CaVi1998}]
\label{theor:carvit}
Every exact category with a \qcart projective cover is the exact completion of a \qcart category,
namely its subcategory of projectives.
Conversely, every \qcart category appears as a projective cover of its exact completion.
\end{theor}

\begin{rem}
The theory of exact completions provides an isomorphism of posets
\[
  \text{Sub}_{\excm}(\Gamma X) \cong \presub{\catcm}(X),
\]
which, in particular, commutes with the regular-logic structure on both posets~\cite{CaVi1998}.
Hence it guarantees that the internal logic of \exct on a projective is still the BHK-interpretation in \catct of intuitionistic logic.
\end{rem}

\section{Elemental categories}
\label{sec:elemcat}

An object $ G $ in a category \catct is called a \emph{strong generator} if an arrow $ f \colon X \to Y $ is an iso whenever
\[	(\forall y \colon G \to Y)(\exists! x \colon G \to X)\, f x = y.	\]
An object $ G $ is \emph{separating} if for any pair of arrows $ f,g \colon X \to Y $, $ f = g $ whenever
\[	(\forall x \colon G \to X)\, f x = g x.	\]

\begin{defin}
A category \catct with a terminal object is \emph{elemental} if the terminal object is a strong generator and is separating%
\end{defin}

The terminology comes from the fact that objects in elemental categories can be regarded,
to a certain extent, as collections of global elements.
In particular, this simplifies the internal logic of an elemental category, as shown in \cref{prop:welemlogic,corol:elemlog,prop:elemlogic}.

We denote global elements $ x \colon \termm \to X $ as $ x \in X $ and simply call them elements.
Moreover, if $ f \colon X \to Y $ and $ y \in Y $, we write $ y \inar f $ if there is $ x \in X $ such that $ fx = y $.
An arrow $ f \colon X \to Y $ is \emph{injective} if $ (\forall x,x' \in X)( f x = f x' \limpld x = x' ) $,
while it is \emph{surjective} if $ (\forall y \in Y)\, y \inar f $.
Notice that the terminal object is a strong generator \iff an arrow is iso exactly when it is injective and surjective.
Finally, an object $ Y $ in \catct is a \emph{choice object}
if every surjection $ f \colon X \to Y $ has a section,
\ie an arrow $ g \colon Y \to X $ such that $ f g = \id_Y $.

\begin{exmp}
Sets and functions in (a model of) ZF form an elemental category.
Moreover, the Axiom of Choice is equivalent to the fact that every object is a choice object,
\ie every surjective function has a section.
\end{exmp}

In the following lemma we collect some immediate results.

\begin{lem}
\label{lem:elemconseq}
Let \catct be a category with a terminal object.
\begin{enumerate}[label={\itshape(\roman*)}]
\item	\label{elemconseq:sep}
	If the terminal object is separating, then every surjection is epic and every injection is monic.
\item	\label{elemconseq:proj}
	The terminal object is projective \iff every cover is surjective.
\item	\label{elemconseq:strgen}
	If the terminal object is a strong generator, then every surjection is a cover.
	The converse holds if every injection is monic.
\end{enumerate}
\end{lem}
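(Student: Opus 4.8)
All three statements follow from routine diagram chases, exploiting the fact that under the stated hypotheses one may test equality and invertibility of arrows on global elements. First I would dispatch \ref{elemconseq:sep}: assuming \termm separating, to see that a surjection $f \colon X \to Y$ is epic it suffices, given $g,h \colon Y \to Z$ with $gf = hf$, to check $gy = hy$ for each $y \in Y$; and this holds because surjectivity supplies $x \in X$ with $fx = y$, whence $gy = g(fx) = h(fx) = hy$. The argument that an injection $f$ is monic is dual: given $fg = fh$ with $f$ injective, for each global element $z$ of the common domain one has $f(gz) = f(hz)$, so $gz = hz$ by injectivity, and separation concludes.

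For \ref{elemconseq:proj}, the key observation is that projectivity of \termm, instantiated at a cover $f \colon X \to Y$ and at an arbitrary global element $y \in Y$, produces exactly a lift $x \in X$ with $fx = y$, that is, the statement $y \inar f$; hence \termm projective implies every cover is surjective. Conversely, if every cover is surjective, then for a cover $f$ and any $y \in Y$ the surjectivity of $f$ already yields the required lift, which is precisely what projectivity of \termm demands.

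The first half of \ref{elemconseq:strgen} uses the strong generator criterion directly. Given a surjection $f \colon X \to Y$ and a factorisation $f = gh$ with $g \colon Z \to Y$ monic, I would verify $(\forall y \in Y)(\exists! z \in Z)\, gz = y$: for existence, surjectivity of $f$ gives $x \in X$ with $fx = y$, and then $hx \in Z$ satisfies $g(hx) = fx = y$; for uniqueness, $gz = gz'$ forces $z = z'$ since $g$, being monic, is left-cancellable, in particular on global elements. Thus $g$ is an iso and $f$ is a cover.

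For the converse I would first recall the remark that \termm is a strong generator if and only if an arrow is an iso exactly when it is both injective and surjective — one implication being immediate, the other being the reformulation of the $\exists!$ condition as surjectivity together with injectivity. So, assuming every injection is monic and every surjection is a cover, it is enough to show that an arrow $f \colon X \to Y$ which is injective and surjective is an iso: surjectivity makes $f$ a cover, injectivity makes $f$ monic, and applying the definition of cover to the trivial factorisation $f = f \circ \id_X$ then forces $f$ itself to be an iso. These are all immediate manipulations; the only point deserving a moment's care is this last one, namely that a monic cover is automatically invertible.
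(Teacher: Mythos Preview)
Your proof is correct and is precisely the routine verification the paper has in mind; the paper itself omits any argument, simply presenting the lemma as a collection of ``immediate results'' after having noted the reformulation of the strong-generator condition that you invoke in part~\ref{elemconseq:strgen}.
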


In the presence of weak equalisers,
we can derive elementality from a categorical choice principle.

\begin{lem}
\label{lem:weqelem}
Let \catct be a \qcart category.
If every object is a choice object, then \catct is elemental.
\end{lem}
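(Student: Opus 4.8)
The plan is to verify the two clauses in the definition of an elemental category separately: first that the terminal object $\termm$ is separating, and then --- using this --- that it is a strong generator. (A terminal object is available since \catct, being \qcart, has all finite products.) I expect the proof to be short; the one point to be careful about is precisely this order of the two steps.

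For separating, suppose $f,g\colon X\to Y$ agree on every element $x\in X$, \ie $fx=gx$ for all $x\colon\termm\to X$, and let $e\colon E\to X$ be a weak equaliser of $f$ and $g$, which exists as \catct is \qcart. Given any $x\in X$, from $fx=gx$ and the weak universal property of $e$ (applied to $x\colon\termm\to X$) we obtain $k\in E$ with $ek=x$; since $x$ was arbitrary, $e$ is surjective. As every object of \catct is a choice object, $X$ in particular is one, so $e$ has a section $s\colon X\to E$ with $es=\id_X$. Then $f=fes=ges=g$, using $fe=ge$. Hence $\termm$ is separating.

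For strong generator, let $f\colon X\to Y$ be such that every $y\in Y$ has a unique preimage $x\in X$ along $f$; we must show $f$ is an iso. The existence half of the hypothesis says exactly that $f$ is surjective, so, $Y$ being a choice object, there is $s\colon Y\to X$ with $fs=\id_Y$. It remains to show $sf=\id_X$, and by the separating property just established it is enough to check this on elements: for $x\in X$, both $x$ and $sfx$ are preimages of $fx$ along $f$ (indeed $f(sfx)=(fs)(fx)=fx$), so the uniqueness clause forces $sfx=x$. Thus $s$ is a two-sided inverse of $f$, so $f$ is an iso and $\termm$ is a strong generator.

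The only genuine subtlety is the dependency between the two parts: the strong-generator argument relies on $\termm$ being already known to be separating in order to pass from the pointwise equality $sfx=x$ to $sf=\id_X$. Finite products themselves play no further role beyond providing the terminal object --- weak equalisers and the choice-object hypothesis do all the work.
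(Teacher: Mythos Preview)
Your proof is correct. The argument for separating is word-for-word the paper's own, and your strong-generator argument is valid.

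The only difference is in how the strong-generator clause is obtained. The paper does not construct the inverse directly: it first observes that every surjection is a cover (since a section of $f$ exhibits it as split epi, and a split epi factoring through a mono forces the mono to be iso), then invokes \cref{lem:elemconseq}\ref{elemconseq:sep} (separating $\Rightarrow$ every injection is monic) and the converse direction of \cref{lem:elemconseq}\ref{elemconseq:strgen} (every surjection a cover $+$ every injection monic $\Rightarrow$ strong generator). Your route instead builds the two-sided inverse by hand: take the section $s$ from surjectivity, and use uniqueness of preimages together with separating to verify $sf=\id_X$ pointwise. Both arguments need separating to be proved first --- the paper needs it to get ``injections are monic'', you need it to upgrade the pointwise identity $sfx=x$ to $sf=\id_X$ --- so the logical dependencies are the same. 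Your version is slightly more self-contained; the paper's is shorter because it recycles \cref{lem:elemconseq}.
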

\begin{proof}
Since every surjection has a section, it follows that every surjection is a cover.
Therefore it is enough to show that the terminal object is separating,
since elementality will follow from \ref{lem:elemconseq}\ref{elemconseq:strgen}.
Let $ f,g \colon X \to Y $ be such that $ f x = g x $ for every $ x \in X $,
and let $ e \colon E \to X $ be a weak equaliser for $ f $ and $ g $.
Since $ f $ and $ g $ coincide on elements, $ e $ is surjective,
hence it has a section $ s \colon X \to E $.
Therefore $ f = f e s = g e s = g $ as required.
\end{proof}

Since an equaliser is the same as a monic weak equaliser,
with a similar argument we can also prove the following.

\begin{lem}
\label{lem:eqelem}
Let \catct be a category with finite limits.
Then \catct is elemental \iff every surjection is a cover.
\end{lem}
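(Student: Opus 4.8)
The plan is to mimic the argument of Lemma \ref{lem:weqelem}, but now exploiting the presence of genuine equalisers to get separatedness for free, so that the only real content is the equivalence between "terminal object is a strong generator" and "every surjection is a cover" in the presence of finite limits. First I would establish the forward direction: assuming \catct is elemental, part \ref{elemconseq:strgen} of Lemma \ref{lem:elemconseq} immediately gives that every surjection is a cover, since elementality includes the terminal object being a strong generator. So this direction is essentially a citation of what is already proved.

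For the converse, suppose every surjection is a cover. I first need to recover separatedness of the terminal object. Given $f, g \colon X \to Y$ agreeing on all global elements, let $e \colon E \to X$ be their equaliser (which exists since \catct has finite limits). Because $f$ and $g$ coincide on elements, every element of $X$ factors through $e$, so $e$ is surjective; by hypothesis $e$ is then a cover. But $e$ is also monic (an equaliser), so being a cover forces $e$ to be an iso, whence $f = g$. This shows the terminal object is separating. Since separatedness is now in hand, Lemma \ref{lem:elemconseq}\ref{elemconseq:sep} tells us every injection is monic, and then the second half of Lemma \ref{lem:elemconseq}\ref{elemconseq:strgen} — whose hypothesis "every injection is monic" is now satisfied — upgrades "every surjection is a cover" to "the terminal object is a strong generator". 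Combining this with separatedness gives elementality.

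I expect the main obstacle, such as it is, to be purely bookkeeping: making sure the logical dependencies in Lemma \ref{lem:elemconseq} are invoked in the right order, since \ref{elemconseq:strgen}'s converse direction needs "every injection is monic", which itself we only obtain from \ref{elemconseq:sep} after we have already proved separatedness. The mathematical heart is the one-line observation that a monic cover is an iso, applied to an equaliser that happens to be surjective; everything else is assembling the pieces already available from the preceding lemma. No new ideas beyond those in the proof of Lemma \ref{lem:weqelem} are required — indeed this is strictly easier, since finite limits let us use an honest equaliser in place of a weak one and dispense with the appeal to sections.
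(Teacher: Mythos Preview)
Your proposal is correct and matches the paper's intended argument exactly: the paper merely says ``since an equaliser is the same as a monic weak equaliser, with a similar argument we can also prove the following,'' and your expansion is precisely the intended unfolding of that hint---the surjective equaliser is a monic cover, hence an iso, giving separatedness, and then Lemma~\ref{lem:elemconseq}\ref{elemconseq:sep} and~\ref{elemconseq:strgen} assemble elementality.
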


The following result proves that, in every \qcart category,
extensionality of presubobjects is equivalent to a categorical choice principle.

\begin{prop}
\label{prop:welemlogic}
Let \catct be a \qcart category
and consider the following.
\begin{enumerate}[label={\itshape(\roman*)}]
\item	\label{welemlogic:elem}
	Every surjection has a section.
\item	\label{welemlogic:ext}
	For every object $ X $ and arrows $ a,b $ with codomain X,
	\[	a \leq b	\qquad \text{\iff} \qquad	(\forall x \in X)( x \inar a \limpld x \inar b ).	\]
\item	\label{welemlogic:ac}
	For every pseudo-relation $ r \colon R \to X \times Y $
	\[	(\forall x \in X)(\exists y \in Y)\, \pbk{x,y} \inar r	\, \limpld \,%
							(\exists f \colon X \to Y)(\forall x \in X)\, \pbk{x, f x} \inar r.	\]
\end{enumerate}
Statements \ref{welemlogic:elem} and \ref{welemlogic:ext} are equivalent and imply statement \ref{welemlogic:ac}.
If the terminal object is separating, then they are equivalent.
\end{prop}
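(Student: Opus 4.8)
The plan is to establish the cycle (i)~$\Rightarrow$~(ii)~$\Rightarrow$~(i) together with the implication (i)~$\Rightarrow$~(iii), and then, under the additional hypothesis that the terminal object is separating, to close the cycle by proving (iii)~$\Rightarrow$~(i). All of these are essentially bookkeeping with elements except for the ``if'' half of (ii), where a statement about elements of $X$ has to be converted into a genuine factorisation of arrows; this is the one place where both the \qcart hypothesis (to build a weak pullback) and the choice principle (i) get used, and I expect it to be the main obstacle.

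For (i)~$\Rightarrow$~(ii), write $a \colon A \to X$ and $b \colon B \to X$. The passage from $a \leq b$ to the pointwise containment is immediate: a factorisation $bh = a$ transports a witness $z$ of $x \inar a$ to the witness $hz$ of $x \inar b$. For the converse, form a weak pullback $P$ of $a$ and $b$ (such weak pullbacks exist in any \qcart category), with projections $p_1 \colon P \to A$ and $p_2 \colon P \to B$ satisfying $a p_1 = b p_2$. The key point is that $p_1$ is surjective: for $z \in A$ we have $az \inar a$, hence $az \inar b$ by hypothesis, so there is $w \in B$ with $bw = az$; applying the weak universal property of $P$ to the cone formed by $z$ and $w$ gives $t \in P$ with $p_1 t = z$, so $z \inar p_1$. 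By (i) the surjection $p_1$ has a section $s$, and then $h \coloneqq p_2 s$ satisfies $bh = b p_2 s = a p_1 s = a$, that is, $a \leq b$.

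The remaining implications are quick. For (ii)~$\Rightarrow$~(i), given a surjection $f \colon X \to Y$, apply (ii) with codomain $Y$ to the arrows $\id_Y$ and $f$: every $y \in Y$ satisfies $y \inar \id_Y$ trivially, and $y \inar f$ by surjectivity, so $\id_Y \leq f$, and a witness of this inequality is precisely a section of $f$. For (i)~$\Rightarrow$~(iii), if $r \colon R \to X \times Y$ satisfies $(\forall x \in X)(\exists y \in Y)\,\pbk{x,y}\inar r$, then the first component $\pi_1 r \colon R \to X$ is surjective, so by (i) it admits a section $s$; setting $f \coloneqq \pi_2 r \circ s$ we get $r(sx) = \pbk{x, fx}$, hence $\pbk{x,fx}\inar r$, for every $x \in X$.

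Finally, assuming the terminal object is separating, it remains to prove (iii)~$\Rightarrow$~(i). For a surjection $f \colon X \to Y$, consider the pseudo-relation $\pbk{f,\id_X}\colon X \to Y \times X$; it is total (from $Y$ to $X$) precisely because $f$ is surjective, so (iii) produces $g \colon Y \to X$ with $\pbk{y,gy}\inar\pbk{f,\id_X}$ for every $y \in Y$, which unwinds to $f(gy) = y$ for every $y \in Y$. Separation of the terminal object then upgrades this pointwise identity to $fg = \id_Y$, so $g$ is a section of $f$. This is the only step that genuinely uses separation, and it explains why (iii), which otherwise yields merely a ``pointwise section'', is listed separately from the unconditionally equivalent statements (i) and (ii).
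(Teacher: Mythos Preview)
Your proof is correct and follows essentially the same strategy as the paper: the equivalence \textit{(i)}~$\Leftrightarrow$~\textit{(ii)} via a weak pullback whose first leg is surjective, and \textit{(i)}~$\Rightarrow$~\textit{(iii)} via surjectivity of the first projection of $r$, are exactly the paper's arguments. The only difference is in closing the cycle under the separating hypothesis: the paper proves \textit{(iii)}~$\Rightarrow$~\textit{(ii)} (taking $r$ to be a weak pullback of $a$ and $b$, applying \textit{(iii)} to obtain a pointwise factorisation $A\to B$, and invoking separation), whereas you prove \textit{(iii)}~$\Rightarrow$~\textit{(i)} directly using the graph $\langle f,\id_X\rangle$. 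Your route is marginally more economical, since it avoids the extra weak-pullback construction, but both are straightforward once \textit{(i)}~$\Leftrightarrow$~\textit{(ii)} is in hand.
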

\begin{proof}
\ref{welemlogic:elem} $ \Rightarrow $ \ref{welemlogic:ext}
The direction from left to right always holds,
so let us assume that $ (\forall x \in X)( x \inar a \limpld x \inar b ) $
and observe that it amounts to the surjectivity of any weak pullback of $ b $ along $ a $.
Hence there is a section of it which, in turn, yields an arrow witnessing $ a \leq b $.

\ref{welemlogic:ext} $ \Rightarrow $ \ref{welemlogic:elem}
This follows from the fact that an arrow $ f \colon X \to Y $ is surjective precisely when
$ (\forall y \in Y)( y \inar id_Y \limpld y \inar f ) $ and that any arrow witnessing $ id_Y \leq f $ is a section of $ f $.

\ref{welemlogic:elem} $ \Rightarrow $ \ref{welemlogic:ac}
Immediate from the fact that $ (\forall x \in X)(\exists y \in Y)\, \pbk{x,y} \inar r $ amounts to surjectivity of $ r_1 \colon R \to X $.

\ref{welemlogic:ac} $ \Rightarrow $ \ref{welemlogic:ext}
As before, we only need to show the direction from right to left.
Let $ r \colon R \to A \times B $ be given as a weak pullback of $ a $ and $ b $ and observe that
$ (\forall x \in X)( x \inar a \limpld x \inar b ) $ implies $ (\forall u \in A)(\exists v \in B)\, \pbk{u,v} \inar r $.
Therefore we obtain an arrow $ f \colon A \to B $ such that $ b f u = a u $ for every $ u \in A $.
If the terminal object is separating, this implies $ a \leq b $ as required.
\end{proof}

In the presence of finite limits we have the following.

\begin{prop}
\label{prop:elemlogic}
Let \catct be a category with finite limits,
and consider the following.
\begin{enumerate}[label={\itshape(\roman*)}]
\item	\label{elemlogic:elem}
	Every surjective mono has a section.
\item	\label{elemlogic:ext}
	For every object $ X $ and monos $ a,b $ with codomain $ X $,
	\[	a \leq b \qquad \text{\iff} \qquad (\forall x \in X)( x \inar a \limpld x \inar b ).	\]
\item	\label{elemlogic:auc}
	For every relation $ r \colon R \mono X \times Y $
	\[	(\forall x \in X)(\exists ! y \in Y)\, \pbk{x,y} \inar r	\,\limpld\,%
							(\exists f \colon X \to Y)(\forall x \in X)\, \pbk{x, f x} \inar r.	\]
\end{enumerate}
Statements \ref{elemlogic:elem} and \ref{elemlogic:ext} are equivalent,
and imply statement \ref{elemlogic:auc}.
If the terminal object is separating, then they are equivalent.
\end{prop}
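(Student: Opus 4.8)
The plan is to follow the proof of \cref{prop:welemlogic} almost verbatim, systematically replacing arbitrary arrows by monos, weak pullbacks by genuine pullbacks, and surjections by surjective monos, and exploiting that we now have honest finite limits at our disposal. The routine implications then transfer with no real change; the one genuinely new point --- which I expect to be the main obstacle --- is to deduce from the \emph{uniqueness} clause of \ref{elemlogic:auc} that the first projection $r_1\colon R\to X$ of the given relation is monic, \emph{without} appealing to separatedness (under which ``injective on global elements'' would already yield ``monic'').

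For \ref{elemlogic:elem} $\Rightarrow$ \ref{elemlogic:ext}, the direction from left to right holds unconditionally. For the converse, given monos $a\colon A\mono X$ and $b\colon B\mono X$ with $(\forall x\in X)(x\inar a\limpld x\inar b)$, I would form the pullback $P$ of $b$ along $a$; its projection $p\colon P\to A$ is monic (a pullback of $b$), and the hypothesis says exactly that $p$ is surjective, so \ref{elemlogic:elem} provides a section of $p$, and composing it with the projection $P\to B$ witnesses $a\leq b$. For \ref{elemlogic:ext} $\Rightarrow$ \ref{elemlogic:elem}: a surjective mono $m\colon M\mono Y$ satisfies $(\forall y\in Y)(y\inar\id_Y\limpld y\inar m)$, so \ref{elemlogic:ext} applied to the monos $\id_Y$ and $m$ gives $\id_Y\leq m$, which is exactly a section of $m$.

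For \ref{elemlogic:elem} $\Rightarrow$ \ref{elemlogic:auc}, write the relation as $r=\pbk{r_1,r_2}\colon R\mono X\times Y$. The existence clause makes $r_1$ surjective, so once $r_1$ is known to be monic, \ref{elemlogic:elem} supplies a section $s$ of $r_1$ and $f\coloneqq r_2 s$ does the job, since $r(sx)=\pbk{x,fx}$ for every $x\in X$. To see that $r_1$ is monic, let $k_1,k_2\colon K\to R$ be its kernel pair and $e\colon E\mono K$ the equaliser of $rk_1$ and $rk_2$. For any $w\in K$ one has $r_1k_1w=r_1k_2w=:x$, and both $\pbk{x,r_2k_1w}$ and $\pbk{x,r_2k_2w}$ are $\inar r$ (witnessed by $k_1w$ and $k_2w$); the uniqueness clause applied to $x$ then forces $r_2k_1w=r_2k_2w$, hence $rk_1w=rk_2w$, i.e.\ $w\inar e$. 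Thus $e$ is a surjective mono, so by \ref{elemlogic:elem} it has a section and is therefore an iso; in particular $e$ is epi, whence $rk_1=rk_2$, whence $k_1=k_2$ (as $r$ is monic), and so $r_1$ is monic.

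Finally, assuming the terminal object is separating, \ref{elemlogic:auc} $\Rightarrow$ \ref{elemlogic:elem}: for a surjective mono $m\colon M\mono Y$, the subobject $\pbk{m,\id_M}\colon M\mono Y\times M$ satisfies $(\forall y\in Y)(\exists!\,w\in M)\,\pbk{y,w}\inar\pbk{m,\id_M}$ --- existence because $m$ is surjective, uniqueness because $m$ is monic --- so \ref{elemlogic:auc} yields $f\colon Y\to M$ with $m(fy)=y$ for all $y\in Y$, and separatedness gives $mf=\id_Y$, a section of $m$. Together with the equivalence \ref{elemlogic:elem} $\Leftrightarrow$ \ref{elemlogic:ext} this closes the loop and makes all three statements equivalent. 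Apart from the monicity of $r_1$, every step is a mechanical transcription of the corresponding step in \cref{prop:welemlogic}, so I expect no real difficulty there.
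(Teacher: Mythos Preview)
The paper does not supply an explicit proof of this proposition: it is stated immediately after \cref{prop:welemlogic} with the remark that ``an analogous equivalence holds'' in the finite-limit setting, together with a pointer to Propositions~4.3 and~4.4 in~\cite{Pa2012}. Your proposal is exactly the intended analogue and is correct in every detail. In particular, you rightly identify the one place where the transcription is not mechanical---the implication \ref{elemlogic:elem} $\Rightarrow$ \ref{elemlogic:auc}, where only surjective \emph{monos} are assumed to split---and your kernel-pair/equaliser argument that the uniqueness clause forces $r_1$ to be monic (without appealing to separatedness) is clean and valid. The remaining steps match the template of \cref{prop:welemlogic} as you say; the only cosmetic difference is that you close the loop via \ref{elemlogic:auc} $\Rightarrow$ \ref{elemlogic:elem} rather than \ref{elemlogic:auc} $\Rightarrow$ \ref{elemlogic:ext}, which is immaterial given the established equivalence of \ref{elemlogic:elem} and \ref{elemlogic:ext}.
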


\begin{rem}
In the presence of finite limits,
\cref{lem:eqelem} tells us that elementality is equivalent to item \ref{elemlogic:elem} in \cref{prop:elemlogic}.
Hence \cref{prop:elemlogic} generalises Propositions 4.3 and 4.4 in~\cite{Pa2012a},
where only the implications from elementality to \ref{elemlogic:ext} and \ref{elemlogic:auc} are proved.
As shown in \cref{prop:elemlogic}, if the terminal object is separating,
these are in fact equivalent to elementality.
%
\end{rem}

An immediate consequence of \cref{prop:welemlogic} is that the internal logic of an elemental \qcart category is determined,
up to presubobject equivalence, by global elements.
Here we distinguish internal connectives and quantifiers by adding a dot on top of them
and, to increase readability, we commit the common abuse of dealing with representatives instead of actual presubobjects.
A similar result for the usual categorical interpretation of logic is in Theorem 5.6 in~\cite{Pa2012a},
which can be seen as a consequence of \cref{prop:elemlogic}.

\begin{corol}
\label{corol:elemlog}
Let \catct be a \qcart category where every object is a choice object,
and let $ a,b \in \presubc(X) $, $ f,g \colon Y \to X $ and $ r \in \presubc(X \times Y) $, then:
\begin{enumerate}[label={\itshape(\roman*)}]
\item	$ y \inar \presubpb{f}a \ $ \iff $ \ f y \inar a $,
\item	$ y \inar (f \overset{\text{\bfseries .}}{=} g) \ $ \iff $ \ f y = g y $,
\item	$ x \inar (a \overset{\text{\bfseries .}}{\land} b) \ $ \iff $ \ x \inar a \land x \inar b $,
\item	$ x \inar \overset{\text{\bfseries .}}{\exists}_Y r \ $ \iff $ \ (\exists y \in Y)\, \pbk{x,y} \inar r $,
\end{enumerate}
and the presubobjects obtained by
$ \presubpb{f},\overset{\text{\bfseries .}}{=},\overset{\text{\bfseries .}}{\land} $ and $ \overset{\text{\bfseries .}}{\exists} $
are uniquely determined by the universal closure of the previous relations.
\end{corol}
\begin{proof}
We prove the statement for $ \overset{\text{\bfseries .}}{\land} $, the other proofs are similar.
Of course if $ c \colon C \to X $ is a representative of $ a \overset{\text{\bfseries .}}{\land} b $,
then the equivalence in \textit{(iii)} must hold for every $ x \in X $.
Conversely, suppose that $ (\forall x \in X)( x \inar c \liffld x \inar a \land x \inar b ) $
and let $ p \colon P \to X $ be a representative of $ a \overset{\text{\bfseries .}}{\land} b $ (\eg a weak pullback of $ a $ and $ b $).
Then $ x \inar c \liffld x \inar p $ for every $ x \in X $,
so \ref{prop:welemlogic}.\ref{welemlogic:ext} implies that
$ c $ is also a representative of $ a \overset{\text{\bfseries .}}{\land} b $.
\end{proof}

\cref{prop:welemlogic} also allows for a simpler construction of the exact completion
as described in~\cite{CaMa1982,CaVi1998}
in the case of a \qcart category where every object is a choice object.
\Cref{corol:elemeqrel} treats the case of objects
and \cref{corol:elemexfn} that one of arrows.

For a pseudo-relation $ r \colon R \to X \times X $,
denote with $ \sim_r $ the relation induced by $ r $ on the elements of $ X $,
that is
\[	x \sim_r x'	\ \liffld \	\pbk{x,x'} \inar r.	\]

\begin{corol}
\label{corol:elemeqrel}
Let \catct be a \qcart category where every object is a choice object.
Then for every pseudo-relation $ r \colon R \to X \times X $:
\begin{enumerate}[label={\itshape(\roman*)}]
\item	$ r $ is reflexive \iff 
\[	 (\forall x \in X)\, x \sim_r x.	\]
\item	$ r $ is symmetric \iff 
\[	(\forall x, x' \in X)( x \sim_r x' \limpld x' \sim_r x)	\]
\item	$ r $ is transitive \iff 
\[	(\forall x,x',x'' \in X)( x \sim_r x' \limpld x' \sim_r x'' \limpld x \sim_r x'' )	\]
\end{enumerate}
\end{corol}
\begin{proof}
This proof and that one of \cref{corol:elemexfn} are just a matter
of unfolding definitions and applying \cref{prop:welemlogic}\ref{welemlogic:ext}.
We prove point \textit{(iii)} in order to exemplify the idea.

One direction is straightforward, so let us assume
\begin{equation}
\label{elemeqrel:trans}
	(\forall x,x',x'' \in X)( x \sim_r x' \land x' \sim_r x'' \limpld x \sim_r x'' ).
\end{equation}
Consider the following weak pullback
\xycentert{
	P	\ar[d]^-{p_1} \ar[r]^-{p_2}	&	R	\ar[d]^-{r_1}	\\
	R	\ar[r]^-{r_2}			&	X			}
and define $ p \coloneqq \pbk{r_1 p_1,r_2 p_2} \colon P \to X \times X $.
According to the construction described in~\cite{CaVi1998},
proving transitivity of $ r $ amounts to show that $ p \leq r $.
Thanks to \ref{prop:welemlogic}.\ref{welemlogic:ext} it is enough to show that
$ \pbk{x,x''} \inar p \limpld \pbk{x,x''} \inar r$.
But $ \pbk{x,x''} \inar p $ implies that there is $ x' \in X $ such that $ x \sim_r x' $ and $ x' \sim_r x'' $,
hence $ \pbk{x,x''} \inar r $ from \eqref{elemeqrel:trans}.
\end{proof}

\begin{corol}
\label{corol:elemexfn}
Let \catct be a \qcart category where every object is a choice object
and let $ r \colon R \to X \times X $ and $ s \colon S \to Y \times Y $ be two pseudo-equivalence relations.
\begin{enumerate}[label={\itshape(\roman*)}]
\item	\label{elemexfn:ext}
	An arrow $ f \colon X \to Y $ gives rise to an arrow $ r \to s $ in \exct \iff,
				\[	(\forall x,x' \in X)( x \sim_r x' \, \limpld \, f x \sim_s f x' ).	\]
\item	\label{elemexfn:eq}
	Two arrows $ f,g \colon r \to s $ in \exct are equal in \exct \iff,
				\[	(\forall x \in X)\, f x \sim_s g x .	\]
\end{enumerate}
\end{corol}

Finally, we can prove that elemental exact completions are precisely those exact categories
with a projective cover consisting of choice objects.
In light of the equivalences in \cref{prop:welemlogic,prop:elemlogic},
this result should be compared with the equivalence, proved in~\cite{MaRo2016},
between elementary doctrines satisfying the Axiom of Choice
and elementary quotient completions satisfying the Axiom of Unique Choice.

\begin{theor}
\label{theor:elemex}
Let \catct be a \qcart category.
Then \exct is elemental \iff every object in \catct is a choice object.
\end{theor}
\begin{proof}
Let us first prove that, if \exct is elemental, then every surjection in \catct splits.
Observe that, if $ f \colon X \to Y $ is surjective in \catct, then $ f \colon \Delta_X \to \Delta_Y $ is surjective in \exct,
hence a cover because of elementality.
But $ \Delta_Y $ is projective in \exct, therefore we get a section $ g \colon \Delta_Y \to \Delta_X $
which is a section of $ f $ in \catct as well.

In order to prove the other implication, thanks to \cref{lem:eqelem}
it is enough to show that in \exct every monic surjection has a section.
To this aim, let $ r \colon R \to X \times X $ and $ s \colon S \to Y \times Y $ be two pseudo-equivalence relations in \catct
and let $ f \colon X \to Y $ be such that $ x \sim_r x' \limpld f x \sim_s f x' $ for every $ x,x' \in X $.
Assume that $ f $ is monic and surjective in \exct.
In particular
\[	(\forall y \in Y)(\exists x \in X)\, f x \sim_s y,	\]
hence \ref{prop:welemlogic}.\ref{welemlogic:ac} yields $ g \colon Y \to X $
such that $ f g y \sim_s y $ for all $ y \in Y $.
If $ y \sim_s y' $, then $ f g y \sim_s f g y' $ and injectivity of $ f $ implies $ g y \sim_r g y' $,
so $ g $ is an arrow $ s \to r $ from \ref{corol:elemexfn}.\ref{elemexfn:ext},
and a section of $ f $ in \exct from \ref{corol:elemexfn}.\ref{elemexfn:eq}.
\end{proof}

\section{Fullness and exponentiation}
\label{sec:closure}
This section contains the main contribution of the paper,
which provides a sufficient condition on the choice objects of an elemental exact completion
that ensures the local cartesian closure of the latter.

We begin recalling a characterisation of local cartesian closure for elemental categories
in terms of closure under families of partial functional relations~\cite{Pa2012a}.

\begin{defin}
\label{def:properties}
Let $ Y \overset{g}{\longrightarrow} X \overset{f}{\longrightarrow} I $ be two arrows
in a category \catct with finite products.
A pair $ h \colon J \to I $, $ r \colon R \to J \times X \times Y $
\begin{enumerate}[label=(\alph*)]
\item	\label{comm}
	is a \emph{family of partial sections of g} if for every $ j \in J $, $ x \in X $ and $ y \in Y $,
		\[	\pbk{j, x, y} \inar r	\ \limpld \	gy = x,	\]
\item	\label{exist}
	has \emph{domains indexed by f} if for every $ j \in J $ and $ x \in X $
		\[	f x = h j	\ \liffld \	(\exists y \in Y)\, \pbk{j, x, y} \inar r,	\]
\item	\label{uniq}
	is \emph{functional} if for every $ j \in J $, $ x,x' \in X $ and $ y,y' \in Y $
		\[	\pbk{j, x, y}\inar r \, \land \, \pbk{j, x', y'} \inar r \, \land \, x = x'	\ \limpld \	y = y'.	\]
\end{enumerate}
\end{defin}

\begin{defin}
\label{def:undepprod}
Let $ Y \overset{g}{\longrightarrow} X \overset{f}{\longrightarrow} I $ be two arrows
in a category \catct with finite limits.
A pair $ h \colon J \to I $ and $ r \colon R \mono J \times X \times Y $ is a
\emph{family of functional relations over $ f,g $} if
it satisfies properties \ref{comm}--\ref{uniq} from \cref{def:properties}.
If $ J $ is terminal, then $ h \in I $ will be called the \emph{domain index} of $ r $
and $ \pbk{r_2,r_3} \colon R \mono X \times Y$ will be called a \emph{functional relation}.

A \emph{universal dependent product for $ f,g $} is a family of functional relations
$ \phi \colon F \to I $ and $ \alpha \colon P \mono F \times X \times Y $ over $ f,g $ such that,
for every functional relation $ r \colon R \mono X \times Y $ over $ f,g $ with domain index $ i \in I $,
there is a unique $ c \in F $ such that $ \phi c = i $ and for all $ x \in X $ and $ y \in Y $
\begin{equation}
\label{undepprod:univ}
	\pbk{c, x, y} \inar \alpha	\ \liffld \	\pbk{x, y} \inar r.
\end{equation}
\end{defin}

\begin{rem}
Intuitively, the property defining a universal dependent product amounts to say that
the arrow $ \phi $ contains a code $ c $ for every functional relation over $ f,g $.
In an elemental category with finite limits, functional relations coincide with arrows
(because of \cref{prop:elemlogic}\ref{elemlogic:auc})
and Theorem 6.8 in~\cite{Pa2012a} proves that, in such a category,
having all universal dependent products is equivalent to local cartesian closure.
\end{rem}

Considering pseudo-relations instead of relations,
and dropping functionality we obtain the following
version of Aczel's notion of full set.

\begin{defin}
\label{def:rfull}
Let $ Y \overset{g}{\longrightarrow} X \overset{f}{\longrightarrow} I $ be arrows in a \qcart category.
A pair $ h \colon J \to I $, $ r \colon R \to J \times X \times Y $ is a \emph{family of pseudo-relations over $ f,g $}
if it satisfies properties \ref{comm} and \ref{exist} from \cref{def:properties}.
If $ J $ is terminal, then $ h \in J $ will be called the \emph{domain index} of $ r $,
and $ \pbk{r_2,r_3} \colon R \to X \times Y $ will just be called a pseudo-relation over $ f,g $.

A \emph{full family of pseudo-relation over $ f,g $} is a family of pseudo-relations
$ \phi \colon F \to I $ and $ \alpha \colon P \to F \times X \times Y $ over $ f,g $ such that,
for every pseudo-relation $ r \colon R \to X \times Y $ over $ f,g $ with domain index $ i \in I $,
there is $ c \in F $ such that $ \phi c = i $ and for all $ x \in X $ and $ y \in Y $
\begin{equation}
\label{rfull:univ}
	\pbk{c,x,y} \inar \alpha	\ \limpld \	\pbk{x,y} \inar r.
\end{equation}

A \qcart category is \emph{closed for pseudo-relations}
if it has a full family of pseudo-relations for any pair of composable arrows.
\end{defin}

Notice that in~\eqref{rfull:univ} only one direction of the implication is required,
as opposed to the bi-implication in~\eqref{undepprod:univ}.
This is to mimic the behaviour of a full set in CZF as defined by Aczel (see~\cite{Ac1978} pg.\ 58 or~\cite{AcRa2001}):
a (total) relation is not necessarily an element of a full set $ F $,
but it contains as subrelation an element of $ F $.

\begin{exmp}
\cref{prop:typrc} in the last section proves that the E-category of types is closed for pseudo-relations.
\end{exmp}

The next two results show that
closure for pseudo-relations endows the internal logic of a \qcart category with
implication and universal quantification.

\begin{lem}
Let \catct be a \qcart category where every object is a choice object.
If \catct is closed for pseudo-relations,
then for every $ f \colon X \to I $ there is a right adjoint to $ \presubpbf \colon \presubc(I) \to \presubc(X) $.
\end{lem}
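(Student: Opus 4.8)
The plan is to construct the right adjoint $\presubpbf^{\mathord{*}} \dashv \forall_f$ directly, using the full family of pseudo-relations to manufacture, for each presubobject over $X$, a presubobject over $I$ whose elements code the ``sections over each fibre''. First I would fix $f \colon X \to I$ and a representative $a \colon A \to X$ of a presubobject in $\presubc(X)$. The key observation is that, thanks to \cref{corol:elemlog}, everything can be checked on global elements, so I want to build an object over $I$ whose element over $i \in I$ witnesses ``for every $x \in X$ with $f x = i$, $x \inar a$''. To get this into the shape required by \cref{def:rfull}, I would consider the composable pair $A \xrightarrow{a} X \xrightarrow{f} I$ and take a full family of pseudo-relations $\phi \colon F \to I$, $\alpha \colon P \to F \times X \times A$ over $f, a$. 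Properties \ref{comm} and \ref{exist} say precisely that a code $c \in F$ with $\phi c = i$ gives, for each $x$ in the fibre of $f$ over $i$, some $y \in A$ with $a y = x$ (and nothing outside that fibre); in other words, such a $c$ witnesses that $a$ ``covers'' the whole fibre over $i$.

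Next I would define $\forall_f a$ to be the presubobject of $I$ represented by $\phi \colon F \to I$ itself — or more precisely, to make the adjunction work, by a suitable presubobject built from $\phi$. The unit/counit verification then reduces to two element-chasing statements via \cref{prop:welemlogic}.\ref{welemlogic:ext}. For the inequality $\presubpbf b \leq a \implies b \leq \forall_f a$ (where $b \colon B \to I$): given $i \in B$, I need $i \inar \phi$, i.e.\ a code $c$; but $\presubpbf b \leq a$ means every $x$ with $f x \inar b$ satisfies $x \inar a$, so the full pseudo-relation whose domain index is $i$ (namely the relation ``$x \inar a$ restricted to the fibre over $i$'', which is a pseudo-relation over $f, a$ precisely because $\presubpbf b \leq a$ guarantees property \ref{exist}) has, by fullness, a code $c \in F$ with $\phi c = i$. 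Conversely, $b \leq \forall_f a \implies \presubpbf b \leq a$ requires: if $x \in X$ and $f x \inar b$, then $f x \inar \phi$, so there is $c$ with $\phi c = f x$, and then by \ref{exist} applied to $\alpha$ at $c$ and this $x$ there is $y \in A$ with $a y = x$, i.e.\ $x \inar a$.

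The main obstacle I anticipate is the mismatch between the \emph{weak} universal property in~\eqref{rfull:univ} — only one direction of the implication, and no uniqueness of the code — and the need for an honest adjunction, which is an equivalence of hom-posets. The one-directional implication is actually the helpful side here (a code guarantees the fibre is covered), but I must be careful that the presubobject $\forall_f a$ does not depend on the chosen representative $a$ and that the construction is monotone in $a$; monotonicity should follow because enlarging $a$ only makes more pseudo-relations available, hence does not shrink the set of valid codes, but this needs the ``domains indexed by $f$'' clause to be handled uniformly. A second, more technical point is that $\phi \colon F \to I$ as constructed need not itself be the right adjoint on the nose: I expect to have to intersect it (using $\overset{\text{\bfseries .}}{\land}$ from \cref{corol:elemlog}) with the presubobject cutting out those $c \in F$ for which the fibre $\alpha^{-1}(c)$ actually exhausts $a$ over the fibre — but since property \ref{exist} is built into being a family of pseudo-relations, this should be automatic, and the proof should go through cleanly once the element-wise reformulation via \cref{corol:elemlog} is in place.
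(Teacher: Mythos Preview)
Your proposal is correct and follows essentially the same route as the paper: take the full family $\phi,\alpha$ over the composable pair $A\xrightarrow{a}X\xrightarrow{f}I$, set $\forall_f a \coloneqq \phi$, and verify the two directions of the adjunction element-wise via \cref{prop:welemlogic}.\ref{welemlogic:ext}, using property~\ref{exist} for one direction and fullness (applied to the fibre $f^*i$ together with a witnessing arrow into $A$ coming from $f^*b\leq a$) for the other. Your two anticipated obstacles dissolve: the paper takes $\phi$ itself as the value of $\forall_f$ with no further intersection needed, well-definedness comes from uniqueness of full families up to presubobject equivalence, and monotonicity is a consequence of the adjunction rather than something to be checked in advance.
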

\begin{proof}
As for weak pullbacks, it is easy to see that full families are unique up to presubobject equivalence.
This defines an order-preserving function $ \forall_f \colon \presubc(X) \to \presubc(I) $.
Given an arrow $ g \colon Y \to X $,
let $ \phi \colon F \to I $ and $ \alpha \colon P \to F \times X \times Y $ be a full family of pseudo-relations over $ f,g $.
We need to show that $ h \leq \phi \liffld \presubpb{f} h \leq g $ for every $ h \colon Z \to I $,
and we shall make use of the statement in \ref{prop:welemlogic}.\ref{welemlogic:ext} in doing so.

Assume $ h \leq \phi $.
We have that $ x \inar \presubpb{f} h $ implies $ f x \inar h \leq \phi $,
so there is $c \in F $ such that $ \phi c = f x $ and,
from \ref{def:undepprod}.\ref{exist},
we obtain $ y \in Y $ such that $ \pbk{c,x,y} \inar \alpha $.
In particular, $ g y = x $, \ie $ x \inar g $.

Suppose now that $ \presubpb{f} h \leq g $.
For every $ i \inar h $ we have $ \presubpb{f} i \leq \presubpb{f} h \leq g $,
so there is $ e \colon Z' \to Y $ such that $ g e = \presubpb{f} i $, where $ Z' $ is the domain of $ \presubpb{f} h $.
It is easy to see that $ \pbk{\presubpb{f} i,e} \colon Z' \to X \times Y $ is a pseudo-relation over $ f,g $ with domain index $ i \in I $.
In particular, there is $ c \in F $ such that $ \phi c = i $, \ie $ i \inar \phi $.
\end{proof}

Recall from \cref{rem:bhklog} that regular logic is valid under the BHK-interpretation in any \qcart category.
From the above lemma and results in~\cite{Pa2004} we obtain the following.

\begin{corol}
Let \catct be a \qcart category which is \prclosd and where every object is a choice object.
Then the $ (\top,\land, \limpd, \exists, \forall) $-fragment of intuitionistic first order logic
is valid under the BHK-interpretation in \catct.
\end{corol}

\begin{rem}
\label{rem:elemlog}
With the same hypothesis as the previous corollary,
we can extend \cref{corol:elemlog}.
Subobjects obtained by $ \overset{\text{\bfseries .}\,}{\limpd} $ and $ \overset{\text{\bfseries .}}{\forall} $ are determined,
up to presubobject equivalence, by the universal closure of the following relations:
\begin{enumerate}
\item[5.]	$ x \inar (a \overset{\text{\bfseries .}\,}{\limpd} b) \ $ \iff $ \ x \inar a \limpld x \inar b $,
\item[6.]	$ x \inar \overset{\text{\bfseries .}}{\forall}_Y r \ $ \iff $ \ (\forall y \in Y)\, \pbk{x,y} \inar r $.
\end{enumerate}
\end{rem}

The following theorem proves that \prclosure provides a sufficient condition
for the local cartesian closure of an elemental exact completion.

\begin{theor}
\label{theor:lccex}
Let \catct  be a \qcart category where every object is a choice object.
If \catct is \prclosd, then \exct is locally cartesian closed.
\end{theor}

The idea of the proof is to isolate the functional relations from a suitable full family of pseudo-relations,
and to define an equivalence relation to identify point-wise equal functional relations.
In doing so, we exploit the characterisations of the internal logic and of the exact completion construction
provided by elementality (\cref{corol:elemlog,corol:elemeqrel,corol:elemexfn,rem:elemlog}).
%

\begin{proof}
We shall show that \exct has all universal dependent products.
Thanks to \cref{corol:elemeqrel,corol:elemexfn} we can regard objects $ X $ in \exct as pairs $ (X_0, \sim_X) $
where $ \sim_X $ is a pseudo equivalence relation on the elements $ X_0 $,
and arrows $ f \colon X \to Y $ in \exct as arrows $ f \colon X_0 \to Y_0 $ in \catct
such that $ f x \sim_Y f x' $ whenever $ x \sim_X x' $.

Let $ Y \overset{g}{\longrightarrow} X \overset{f}{\longrightarrow} I $ be a pair of composable arrows in \exct.
Define two pseudo-relations $ \tau \colon T_0 \to X_0 \times I_0 $ and $ \sigma \colon S_0 \to Y_0 \times T_0 $ in \catct by the formulas
\begin{equation}
\label{lccex:domcod}
	fx \sim_I i	\qquad\text{and}\qquad		gy \sim_X \tau_1 t,
\end{equation}
respectively, for $ i \in I_0, x \in X_0, y \in Y_0 $ and $ t \in T_0 $,
and let $ \phi \colon F_0 \to I_0 $, $ \alpha \colon P_0 \to F_0 \times T_0 \times S_0 $ be a full family of pseudo-relations
over $ S_0 \overset{\sigma_2}{\longrightarrow} T_0 \overset{\tau_2}{\longrightarrow} I_0 $.
This means that, for every $ c \in F_0, t \in T_0 $ and $ s \in S_0 $,
\begin{gather}
\label{lccex:fullfamsec}	\pbk{c,t,s} \inar \alpha	\limpld		\sigma_2 s = t,	\\
\label{lccex:fullfamdom}	\tau_2 t = \phi c	\liffld		(\exists s \in S_0)\, \pbk{c,t,s} \inar \alpha.
\end{gather}

Let $ \gamma \colon G_0 \to F_0 $ be a presubobject of $ F_0 $ defined by the formula
\begin{equation}
\label{lccex:funrelobj}
	(\forall t,t' \in T_0)(\forall s,s' \in S_0)( \pbk{c,t,s} \inar \alpha	\land	\pbk{c,t',s'} \inar \alpha	%
							    \land	\tau_1 t \sim_X \tau_1 t'	\limpd	\sigma_1 s \sim_Y \sigma_1 s'),
\end{equation}
for $ c \in F_0 $, and let $ \beta \colon Q_0 \to G_0 \times X_0 \times Y_0 $ be the pseudo-relation defined by the formula
\begin{equation}
\label{lccex:funrelrel}
	(\exists t \in T_0)(\exists s \in S_0)( \tau_1 t = x	\land	\sigma_1 s = y	\land	\pbk{\gamma u, t, s} \inar \alpha),
\end{equation}
for $ u \in G_0, x \in X_0 $ and $ y \in Y_0 $.

Define now an equivalence relation $ u \sim_G u' $ on $ G_0 $ as the conjunction of $ \phi \gamma u \sim_I \phi \gamma u' $ and
\begin{equation}
\label{lccex:funrelobjeq}
	(\forall t,t' \in T_0)(\forall s,s' \in S_0)( \pbk{\gamma u,t,s} \inar \alpha \land \pbk{\gamma u',t',s'} \inar \alpha \land%
									\tau_1 t \sim_X \tau_1 t' \limpd \sigma_1 s \sim_Y \sigma_1 s' ).
\end{equation}
Reflexivity follows from \eqref{lccex:funrelobj} and reflexivity of $ \sim_I $, and symmetry is trivial.
To verify transitivity, assume $ u \sim_G u' \sim_G u'' $.
Then $ \phi \gamma u \sim_I \phi \gamma u'' $ follows immediately,
so let $ t,t'' \in T_0 $ and $ s,s'' \in S_0 $ be such that $ \pbk{\gamma u,t,s} \inar \alpha $,
$ \pbk{\gamma u'',t'',s''} \inar \alpha $ and $ \tau_1 t \sim_X \tau_1 t'' $.
We need to show that $ \sigma_1 s \sim_Y \sigma_1 s'' $.
From \eqref{lccex:fullfamdom} and the definition of $ \tau $ in \eqref{lccex:domcod}
we have $ f \tau_1 t \sim_I \tau_2 t = \phi \gamma u \sim_I \phi \gamma u' $,
so there is $ t' \in T_0 $ such that $ \tau t' = \pbk{\tau_1 t, \phi \gamma u'} $
and \eqref{lccex:fullfamdom} yields $ s' \in S_0 $ such that $ \pbk{\gamma u',t',s'} \inar \alpha $.
But we also have $ \tau_1 t = \tau_1 t' $ and $ \tau_1 t' \sim_X \tau_1 t'' $,
hence $ \sigma_1 s \sim_Y \sigma_1 s' $ and $ \sigma_1 s' \sim_Y \sigma_1 s'' $ from \eqref{lccex:funrelobjeq}
and the assumption $ u \sim_G u' \sim_G u'' $.
We have thus established that $ G \coloneqq (G_0,\sim_G) $ is an object in \exct and $ \phi \gamma $ is an arrow $ G \to I $ in \exct.

Define an equivalence relation $ q \sim_Q q' $ on $ Q_0 $ as
\begin{equation}
\label{lccex:funrelreleq}
	\beta_1 q \sim_G \beta_1 q'	\land	\beta_2 q \sim_X \beta_2 q'
\end{equation}
which makes $ Q \coloneqq (Q_0,\sim_Q) $ an object of \exct and $ \beta_1 $ and $ \beta_2 $ arrows $ Q \to G $ and $  Q \to X$, respectively.
We need to check that it also makes $ \beta_3 $ an arrow $ Q \to Y $ in \exct.
For $ q,q' \in Q_0 $ we have, from \eqref{lccex:funrelrel} that there are $ t,t' \in T_0 $ and $ s,s' \in S_0 $ such that
\[	\pbk{\beta_2,\beta_3} q = \pbk{\tau_1 t,\sigma_1 s}	\quad	\pbk{\beta_2,\beta_3} q' = \pbk{\tau_1 t',\sigma_1 s'}	\quad	%
		\pbk{\gamma \beta_1 q, t, s} \inar \alpha	\quad \textup{and} \quad	%
								      \pbk{\gamma \beta_1 q', t', s'} \inar \alpha.	\]
If $ q \sim_Q q' $, then $ \beta_1 q \sim_G \beta_1 q' $ and $ \tau_1 t = \beta_2 q \sim_X \beta_2 q' = \tau_1 t' $,
which in turn imply $ \beta_3 q = \sigma_1 s \sim_Y \sigma_1 s' = \beta_3 q' $ as required.

This gives us a pair of arrows $ \phi \gamma \colon G \to I $ and $ \beta \colon Q \mono G \times X \times Y $ in \exct,
where the latter is monic because of \eqref{lccex:funrelreleq}.
We now need to show that this pair is a universal dependent product for $ f,g $.
Let us first remark that in \exct the membership relation $ \inar $ is different from the one in \catct:
we denote the former with $ \inarex $ and continue denoting the latter as $ \inar $.
In particular, we have:
\begin{equation}
	b \inarex f	\quad \liffld \quad	(\exists b' \in B_0)( b \sim_B b'	\land	b' \inar f ).
\end{equation}

We start showing that the pair $ \phi \gamma,\beta $ is a family of functional relations over $ f,g $
by checking the three properties in \cref{def:properties}.
\begin{enumerate}
\item[\ref{comm}]
	$ \phi \gamma,\beta $ is a family of sections of $ g $:
	If $ \pbk{u,x,y} \inarex \beta $,
	then there are $u' \sim_G u$, $ x' \sim_X x $ and $ y' \sim_Y y $ such that $ \pbk{u',x',y'} \inar \beta $.
	From~\eqref{lccex:funrelrel} we obtain $ t \in T_0 $ and $ s \in S_0 $ such that
	$ \tau_1 t = x' $, $ \sigma_1 s = y' $ and $ \pbk{\gamma u',t,s} \inar \alpha $ and,
	from~\eqref{lccex:fullfamsec}, $ g y \sim_X g \sigma_1 s \sim_X \tau_1 \sigma_2 s \sim_X x $.
\item[\ref{exist}]
	$ \phi \gamma,\beta $ has domains indexed by $ f $:
	Reasoning as above, and using \eqref{lccex:fullfamdom}, it is easy to see that $ \pbk{u,x,y} \inarex \beta $ implies $ f x \sim_I \phi \gamma u $.
	Conversely, let $ x \in X_0 $ be such that $ f x \sim_I \phi \gamma u $.
	From \eqref{lccex:domcod} we obtain $ t \in T_0 $ such that $ \tau t = \pbk{x,\phi \gamma u} $ and,
	from \eqref{lccex:fullfamdom}, we get $ s \in S_0 $ such that $ \pbk{\gamma u,t,s} \inar \alpha $,
	that is, $ \pbk{u,x,\sigma_1 s} \inarex \beta $.
\item[\ref{uniq}]
	$ \phi \gamma,\beta $ is functional:
	Let $ \pbk{u,x,y}, \pbk{u,x',y'} \inarex \beta $ be such that $ x \sim_X x' $.
	As in point~\ref{comm}, we obtain $ t,t' \in T_0 $ and $ s,s' \in S_0 $ such that
	$ \tau_1 t \sim_X x $, $ \tau_1 t' \sim_X x' $, $ \sigma_1 s \sim_Y y $, $ \sigma_1 s' \sim_Y y' $,
	$ \pbk{\gamma u,t,s} \inar \alpha $ and $ \pbk{\gamma u,t',s'} \inar \alpha $.
	Hence $ y \sim_Y y' $ from \eqref{lccex:funrelobj}.
\end{enumerate}

It remains to show that the pair $ \phi \gamma \colon G \to I $,
$ \beta \colon Q \mono G \times X \times Y $ has the required universal property.
Let $ r \colon R \mono X \times Y $ be a functional relation over $ f,g $ with domain index $ i_0 \in I_0 $, \ie such that,
for every $ x,x' \in X_0 $ and $ y,y' \in Y_0 $,
\begin{gather}
\label{lccex:funrelcom'}	\pbk{x,y} \inarex r	\limpld		g y \sim_X x,	\\
\label{lccex:funrelexist'}	f x \sim_I i_0		\liffld		(\exists y \in Y)\, \pbk{x,y} \inarex r,	\\
\label{lccex:funreluniq'}	\pbk{x,y} \inarex r \, \land \, \pbk{x',y'} \inarex r \, \land \, x \sim_X x'	\limpld		y \sim_Y y'.
\end{gather}
Properties \eqref{lccex:funrelcom'} and \eqref{lccex:funrelexist'} above ensure that 
the pseudo-relation $ r' \colon R'_0 \to T_0 \times S_0 $ defined by the formula
\begin{equation}
\label{lccex:psrel}
	 \sigma_2 s = t	\land	\tau_2 t = i_0	\land	\pbk{\tau_1 t,\sigma_1 s} \inarex r.
\end{equation}
is a pseudo-relation over $ \tau_2,\sigma_2 $ with domain index $ i_0 \in I_0 $,
hence from fullness of $ \phi $ and $ \alpha $ we get $ c \in F_0 $ such that $ \phi c = i_0 $ and
\begin{equation}
\label{lccex:fulluniv}
	\pbk{c,t,s} \inar \alpha	\ \limpld \	\pbk{t,s} \inar r'
\end{equation}
for every $ t \in T, s \in S_0 $.

Using \eqref{lccex:funreluniq'}, \eqref{lccex:psrel} and \eqref{lccex:fulluniv}
it is easy to see that $ c \in F_0 $ satisfies \eqref{lccex:funrelobj},
hence there is $ u \in G_0 $ such that $ \gamma u = c $.
We now need to show that
\begin{equation}
\label{lccex:univ}
	\pbk{u,x,y} \inarex \beta	\ \liffld \		\pbk{x,y} \inarex r.
\end{equation}

Suppose $ \pbk{u,x,y} \inarex \beta $,
hence there are $u' \sim_G u$, $ x' \sim_X x $ and $ y' \sim_Y y $ such that $ \pbk{u',x',y'} \inar \beta $.
From~\eqref{lccex:funrelrel} we obtain $ t' \in T_0 $ and $ s' \in S_0 $ such that
$ \tau_1 t' = x' $, $ \sigma_1 s' = y' $ and $ \pbk{\gamma u',t',s'} \inar \alpha $.
On the other hand, the family $ \phi \gamma,\beta $ has domains indexed by $ f $ and, in particular, $ f x \sim_I \phi \gamma u $.
So there are $ t \in T_0 $ such that $ \tau t = \pbk{x,\phi \gamma u} $ and,
from \eqref{lccex:fullfamdom}, $ s \in S_0 $ such that $ \pbk{\gamma u,t,s} \inar \alpha $.
It follows from \eqref{lccex:fulluniv} and \eqref{lccex:psrel} that $ \pbk{x,\sigma_1 s} \inarex r $.
Since $ u \sim_G u' $ and $ \tau_1 t = x \sim_X x' = \tau_1 t' $,
\eqref{lccex:funrelobjeq} implies $ \sigma_1 s \sim_Y \sigma_1 s' = y' \sim_Y y $.
Hence $ \pbk{x,y} \inarex r $.

For the converse, suppose $ \pbk{x,y} \inarex r $.
Then $ f x \sim_I i_0 = \phi \gamma u $ and, since the family $ \phi \gamma,\beta $ has domains indexed by $ f $,
there is $ y' \in Y_0 $ such that $ \pbk{u,x,y'} \inarex \beta $.
But then $ \pbk{x,y'} \inarex r $, and~\eqref{lccex:funreluniq'} implies $ y \sim_Y y' $.
Hence $ \pbk{u,x,y} \inarex \beta $.

It only remains to show uniqueness of $ u \in G $.
Suppose that $ u' \in G_0 $ is such that $ \phi \gamma u' \sim_I i_0 $ and satisfies~\eqref{lccex:univ} for all $ x \in X_0 $ and $ y \in Y_0 $.
Clearly $ \phi \gamma u \sim_I \phi \gamma u' $.
Let $ t,t' \in T_0 $ and $ s,s' \in S_0 $ be such that
$ \pbk{\gamma u,t,s}, \pbk{\gamma u',t',s'} \inar \alpha $ and $ \tau_1 t \sim_X \tau_1 t' $.
Hence $ \pbk{u,\tau_1 t,\sigma_1 s}, \pbk{u',\tau_1 t',\sigma_1 s'} \inarex \beta $ from \eqref{lccex:funrelrel}
and, since both $ u $ and $ u' $ satisfy~\eqref{lccex:univ},
we obtain $ \pbk{\tau_1 t,\sigma_1 s}, \pbk{\tau_1 t',\sigma_1 s'} \inarex r $.
Functionality of $ r $ \eqref{lccex:funreluniq'} implies $ \sigma_1 s \sim_Y \sigma_1 s' $, hence $ u \sim_G u' $ as required.
\end{proof}

\section{Models of CETCS as exact completions}											\label{sec:cetcs}

The Constructive Elementary Theory of the Category of Sets (CETCS)
is expressed in a three-sorted language for category theory
and is based on a suitable essentially algebraic formalisation of category theory
over intuitionistic first-order logic.
We refer to~\cite{Pa2012a} for more details.
We now recall the axioms of CETCS.

\begin{enumerate}[label=(C\arabic*),noitemsep]
\item \label{cetcs:c1}		Finite limits and finite colimits exist.
\item \label{cetcs:c2}		Any pair of composable arrows has a universal dependent product.
\item \label{cetcs:c3}		There is a \nno.
\item \label{cetcs:c4}		Elementality.
\item \label{cetcs:c5}		For any object $ X $ there are a choice object $ P $ and a surjection $ P \to X $.
\item \label{cetcs:c6}		The initial object \initt has no elements.
\item \label{cetcs:c7}		The terminal object is indecomposable:
				in any sum diagram $ i \colon X \to S \toop Y \colonop j $,
				$ z \in S $ implies $ z \inar i $ or $ z \inar j $.
\item \label{cetcs:c8}		In any sum diagram $ i \colon \termm \to S \toop \termm \colonop j $,
				the arrows $ i $ and $ j $ are different.
\item \label{cetcs:c9}		Any arrow can be factored as a surjection followed by a mono.
\item \label{cetcs:c10}		Every equivalence relation is a kernel pair.
\end{enumerate}

\begin{rem}
Theorem 6.10 in~\cite{Pa2012a} characterises models of CETCS in terms of standard categorical properties, proving that
CETCS provides a finite axiomatisation of the theory of well-pointed locally cartesian closed pretoposes with a \nno and enough projectives.
Recall that a pretopos is \emph{well-pointed}
if the terminal object is projective, indecomposable, non-degenerate (\ie $ \initm \ncong \termm $) and a strong generator.

In particular, since the terminal object is projective and a strong generator,
we have from \cref{lem:elemconseq} that covers and surjections coincide,
hence the projectives mentioned above are precisely the choice objects given by axiom \ref{cetcs:c5}.
Using cartesian closure, it is also easy to see that these are closed under finite products.
Hence we obtain the following result as a consequence of \cref{theor:carvit}.
\end{rem}

\begin{corol}
\label{corol:projqcart}
Choice objects in a model of CETCS form a \qcart category,
and every model of CETCS is the exact completion of its choice objects.
\end{corol}

\begin{rem}
Choice objects in models of CETCS are in general not closed under all finite limits:
\cref{rem:uip} and \cref{corol:stdcetcs} below show that the category of small types in \mltt
provides a counterexample.
\end{rem}

Using the results in the previous sections,
we can isolate those properties that a \qcart category has to satisfy
in order to arise as a subcategory of choice objects in a model of CETCS.

We begin recalling from~\cite{GrVi1998} that an exact completion \exct is a pretopos \iff \catct has finite sums and is weakly lextensive,
meaning that finite sums interacts well with weak limits.
More precisely,
a \qcart category \catct with finite sums is \emph{weakly lextensive} if
\begin{enumerate}[label=(\alph*),nosep]
\item	sums are disjoint and the initial object is strict,
\item	it is distributive, \ie $ (X \times Y) + (X \times Z) \cong X \times (Y + Z) $,
\item	if $ E_X \to X \psrel Z $ and $ E_Y \to Y \psrel Z $ are weak equalisers,
	then so is $ E_X + E_Y \to X + Y \psrel Z $.
\end{enumerate}
In fact, as observed by Gran and Vitale, the exact completion of a weakly lextensive category coincides with the pretopos completion.

\begin{rem}
\label{rem:nnoex}
Recall that a \emph{\nno} is an object $ N $ together with $ 0 \in N $ and $ s \colon N \to N $ such that,
for any other triple $ X, x \in X, f \colon X \to X $, there is a unique $ g \colon N \to X $ such that
$ g 0 = x $ and $ g s = f s $.
If we drop uniqueness of $ g $, then we obtain a \emph{weak \nno}.

If $ N = (N_0,\sim_N) $ is a \nno in an elemental \exct, then $ N_0 $ is a weak \nno in \catct.
Conversely, a weak \nno in \catct is a weak \nno in \exct as well.
Proposition 5.1 in~\cite{BCRS1998} proves that a cartesian closed category with equalisers and a weak \nno
also has a \nno.
\end{rem}

\begin{prop}
\label{prop:wellpex}
Let \catct be a \qcart category with finite sums.
Then \exct is well-pointed \iff
the terminal object in \catct is non-degenerate and indecomposable and every object in \catct is a choice object.
\end{prop}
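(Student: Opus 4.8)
The plan is to reduce the statement to the characterisation of elemental exact completions in \cref{theor:elemex}, together with three standard facts about the embedding $\Gamma\colon\catcm\to\excm$: it is full and faithful, it preserves all finite limits (in particular the terminal object), and its image is a projective cover of \exct. I will moreover use that $\Gamma$ preserves finite sums. For the initial object this is because there is exactly one arrow $\Gamma\initm\to C$ in \exct for every object $C$ (a unique underlying arrow, lifting uniquely), so $\Gamma\initm$ is initial in \exct; for binary sums it is because the coproduct in \exct of two diagonals $\Gamma X$, $\Gamma Y$ is the diagonal on $X+Y$, with coprojections $\Gamma i$ and $\Gamma j$ (see~\cite{CaVi1998}). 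Thus $\Gamma\initm$, $\Gamma\termm$ and $\Gamma(X+Y)$ are, respectively, the initial object, the terminal object and the coproduct $\Gamma X+\Gamma Y$ of \exct. I then observe that one of the four conditions defining well-pointedness, namely projectivity of the terminal object, holds unconditionally, since $\Gamma\termm$ lies in the image of $\Gamma$, which is a projective cover of \exct; this is exactly why the three conditions on \catct on the right-hand side suffice for the converse implication.

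Of the three remaining conditions, non-degeneracy is immediate: as $\Gamma$ is full and faithful, $\Gamma\initm\cong\Gamma\termm$ in \exct if and only if $\initm\cong\termm$ in \catct, so the terminal object of \exct is non-degenerate precisely when $\termm$ is non-degenerate in \catct. For the strong-generator condition I would argue as follows. If the terminal object of \exct is a strong generator then, since \exct has finite limits, \cref{lem:elemconseq}\ref{elemconseq:strgen} makes every surjection in \exct a cover, so \cref{lem:eqelem} shows that \exct is elemental, and \cref{theor:elemex} then yields that every object of \catct is a choice object. Conversely, if every object of \catct is a choice object then \exct is elemental by \cref{theor:elemex}, hence in particular its terminal object is a strong generator.

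It then remains to transfer indecomposability of the terminal object along $\Gamma$, in both directions. From \exct to \catct: given a sum in \catct with injections $i\colon X\to X+Y$ and $j\colon Y\to X+Y$, and an element $z\in X+Y$, the arrow $\Gamma z\colon\Gamma\termm\to\Gamma(X+Y)=\Gamma X+\Gamma Y$ factors through $\Gamma i$ or $\Gamma j$ by indecomposability of $\Gamma\termm$; since $\Gamma$ is full and faithful, such a factorisation descends to a witness for $z\inar i$ or $z\inar j$. For the converse I would use the explicit description of binary coproducts in \exct: the sum of two pseudo-equivalence relations $A=(R\to X\times X)$ and $B=(S\to Y\times Y)$ has underlying object $X+Y$, with coprojections $A\to A+B$ and $B\to A+B$ whose underlying arrows are the coprojections of $X+Y$. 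An element of $A+B$ in \exct is represented by some $z_0\colon\termm\to X+Y$; indecomposability of $\termm$ in \catct splits $z_0$ through a coprojection of $X+Y$, and composing this splitting with the arrow $\Gamma\termm\to A$ (resp.\ $\Gamma\termm\to B$) it thereby determines --- an arrow of \exct because $A$ (resp.\ $B$) is reflexive, so the required lift into $R$ (resp.\ $S$) is supplied by a reflexivity witness --- one obtains the desired factorisation of $z$ through a coprojection. Hence the terminal object of \exct is indecomposable whenever $\termm$ is indecomposable in \catct.

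The step I expect to be the main obstacle is this last converse. Because weak lextensivity is not assumed, coproducts in \exct need be neither disjoint nor pullback-stable, so I cannot reduce a general sum $A+B$ of objects of \exct to an object in the image of $\Gamma$, nor pull the decomposition of an element back along a projective cover; I must instead argue directly with the componentwise description of coproducts of pseudo-equivalence relations and keep careful track of the chosen representatives of \exct-arrows. The only other delicate point, the verification that a representative $z_0\colon\termm\to X$ produced by indecomposability in \catct genuinely underlies an arrow $\Gamma\termm\to A$ in \exct, is --- as indicated above --- handled by reflexivity of $A$.
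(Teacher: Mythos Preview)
Your proposal is correct and follows essentially the same decomposition as the paper: projectivity of the terminal object is automatic, the strong-generator condition is handled via \cref{theor:elemex}, non-degeneracy via conservativity of $\Gamma$, and indecomposability is transferred along $\Gamma$ in both directions. The only noteworthy difference is that for the indecomposability direction from \catct to \exct you argue directly with the explicit description of coproducts of pseudo-equivalence relations, whereas the paper first assumes elementality of \exct (legitimate, since the choice-object hypothesis is already in force on that side of the biconditional) to streamline the argument---but the paper itself remarks that the direct route you take works just as well.
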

\begin{proof}
We already know that the terminal object in \exct is projective and,
from \cref{theor:elemex}, that
it is a strong generator \iff every object in \catct is a choice object.
For non-degeneracy the equivalence follows from the fact that the embedding $ \Gamma \colon \catcm \to \excm $
is conservative and preserves terminal and initial objects.

If the terminal object is indecomposable in \exct, then clearly it is so in \catct as well.
To show the other implication, let us assume elementality of \exct (although it can be easily proved also without it).
Every element $ z \in X + Y $ in \exct is also an element of $ X_0 + Y_0 $ in \catct.
Indecomposability of \termt in \catct implies $ z \inar i_0 $ or $ z \inar j_0 $,
hence we have $ z \inar i $ in the first case, and $ z \inar j $ in the second case,
where $ i_0,j_0 $ (resp.\ $ i,j $) are the coproduct injections of $ X_0 + Y_0 $ (resp.\ of $ X + Y $).
\end{proof}

We can now collect all the properties seen so far which, all together,
provide a sufficient condition ensuring that an exact completion construction will give rise to a model of CETCS.

\begin{theor}
\label{theor:cetcsmod}
Let \catct be a \qcart category with finite sums.
Then \exct is a model of CETCS if
\begin{enumerate}[label={\itshape(\roman*)}]
\item \label{chob}	every object in \catct is a choice object,
\item \label{wp}	the terminal object in \catct is non-degenerate and indecomposable,
\item \label{wlprc}	\catct is weakly lextensive and closed for pseudo-relations,
\item \label{nno}	\catct has a weak \nno.
\end{enumerate}
\end{theor}
\begin{proof}
Well-pointedness of \exct follows from \cref{prop:wellpex},
while Proposition 2.1 in~\cite{GrVi1998} and \cref{theor:lccex} imply that \exct is a locally cartesian closed pretopos.
The existence of enough projectives is automatic from the completion process,
and the existence of a \nno is ensured by Proposition 5.1 in~\cite{BCRS1998} and \cref{rem:nnoex}.
By Theorem 6.10 in~\cite{Pa2012a}, well-pointed locally cartesian closed pretoposes with enough projectives and a \nno
are precisely the models of CETCS.
\end{proof}

\section{The category of setoids}
\label{sec:setoids}

In this section we apply \cref{theor:cetcsmod} to show that the category of small setoids in \mltt
is a model of CETCS.
We claim no originality on this result:
it is known that this category forms a locally cartesian closed pretopos with a \nno.
In particular, the proof that it is a pretopos has also been formalised in Coq by the second author~\cite{Pa2012b}.
However, we are not aware of a source that presents a complete proof of it,
some references include~\cite{CDPS2005,Ho1994,MoPa2000,Wi2010}.

Let \ML be \mltt with rules for $ \sum $-types, $ \prod $-types, identity types $ =_X $, sum types $ \sumty $,
natural numbers type $ \natty $, empty type $ \natty_0 $, one-element type $ \natty_1 $ and
a universe $ (\ttunivm, \mathsf{T}(\cdot)) $ closed under the previous type formers~\cite{NPS90}.
We denote the non-dependent versions of $ \sum $ and $ \prod $ types by $ \prdty $ and $ \fnty $, respectively.
For simplicity, in this presentation we leave the decoding type constructor $ \mathsf{T}(\cdot) $ implicit. 
Henceforth, we shall be working internally in \ML.
%

We shall work with categories without assuming equality on objects.
This formulation is also known as E-category:
its objects are given by a type, while the arrows between two objects form a setoid,
\ie a type equipped with an equivalence relation which is understood as the equality between arrows.
For more details on E-categories and categories in \mltt we refer to~\cite{PaWi2014}.

The E-category of setoids \catstdt consists of small setoids and extensional functions between them.
More precisely, \emph{small setoids} are pairs $ A \coloneqq (A_0,A_1) $ where
\[	A_0 \typing \ttunivm	\quad \text{and} \quad	A_1 \typing A_0 \fnty A_0 \fnty \ttunivm,	\]
together with proofs of reflexivity, symmetry and transitivity for $ A_1 $.
We write $ A_1(a,a') $ as $ a \sim_A a' $ and omit its proof-terms.
Sometimes we also drop the subscript from the equivalence relation
when this is clear from the context,
and just say ``setoid'' instead of ``small setoid''.

Arrows from a setoid $ A $ to a setoid $ B $ are \emph{extensional functions}, that is,
elements $ f \typing A_0 \fnty B_0 $ together with a proof that $ f $ preserves equality,
\ie an element of
\[	\prod_{a,a' \typing A_0}( a \sim_A a' \fnty f(a) \sim_B f(a') ).	\]
Two extensional functions $ f,g \colon A \to B $ are equal if
\[	\prod_{a \typing A_0} f(a) \sim_B g(a)	\]
is provable:
this is clearly an equivalence relation and makes the type of extensional functions into a (small) setoid.
Identity arrows and composition are defined in the obvious way using application and $\lambda$-abstraction.

Henceforth, we refer to elements of function types as \emph{operations} and to extensional functions simply as functions.
However, we shall not usually distinguish between a function and the underlying operation:
the context should make clear which one we are referring to.

\begin{exmp}
\label{exmp:std}
(1) Every small type $ X \typing \ttunivm $ gives rise to a small setoid $ \frstd{X} \coloneqq (X, =_X) $,
called the \emph{free setoid} on $ X $, where $ x =_X x' $ is the identity type between elements $ x,x' \typing X $.
Proofs of reflexivity, symmetry and transitivity are obtained using the introduction and elimination rules for $ =_X $.

(2) For $ p, n, m \typing \natty $,
define $ n \sim_p m \coloneqq \sum_{k \typing \natty} (n =_\natty m + kp) \sumty (m =_\natty n + kp)  $.
Then when $ p \neq_\natty 0 $,
$ \mathbb{Z}_p \coloneqq (\natty, \sim_p) $ is the setoid of integers modulo $ p $.
If $ q =_\natty lp $,
multiplication by $ l $ on $ \natty $ gives rise to a function $ i \colon \mathbb{Z}_p \to \mathbb{Z}_q $,
and similarly the identity operation $ \lambda x.x \typing \natty \fnty \natty $
produces a function $ e \colon \mathbb{Z}_q \to \mathbb{Z}_p $.
These functions are such that $ e i \sim id_{\mathbb{Z}_p} $.
\end{exmp}

It is not difficult to see that \catstdt has finite limits.
The very definition of setoid ensures that \catstdt has quotients of equivalence relations:
the argument is the same as in Proposition 7.1 in~\cite{MoPa2000}.
The presence of a universe entails that regular epis coincide with surjective functions,
\ie functions $ f \colon A \to B $ such that
\[	\prod_{b \typing B_0} \sum_{a \typing A_0} f(a) \sim_B b	\]
is provable~\cite{Wi2010}.
This implies in particular that they are stable under pullback,
thus we can conclude that \catstdt is exact.

Being inductively defined from reflexivity,
the identity type $ =_X $ is in particular the minimal reflexive relation on the type $ X $.
This has the consequence that, for a setoid $ A $,
any operation $ X \fnty A_0 $ gives rise to a function $ \frstd{X} \to A $ and, in turn,
makes the full subcategory on free setoids a projective cover of \catstdt.
Indeed, for any setoid $ A $,
the identity operation on $ A_0 $ gives rise to a function $ e_A \colon \frstd{A_0} \to A $
which is clearly surjective.
Secondly, given a surjection $ f \colon A \to B $ and a function $ g \colon \frstd{X} \to B $,
the type-theoretic version of the axiom of choice (ttAC), which is provable in type theory
and is also known as the distributivity of $ \prod $-types over $ \sum $-types,
yields an operation $ s \typing X \fnty A_0 $ such that $ f s(x) \sim_B g(x) $ for all $ x \typing X $,
and so a function $ s \colon \frstd{X} \to A $ such that $ f s \sim g $.

\begin{rem}
\label{rem:ttac}
The type-theoretic version of the axiom of choice does not imply that every surjection has a section,
as it instead happens in the category of sets in a model of ZFC.
Indeed, for every surjection $ f \colon A \to B $ in \catstdt,
ttAC does provide an operation $ s \typing B_0 \fnty A_0 $ such that $ f s(b) \sim_B b $ for all $ b \typing B_0 $,
but this is not necessarily extensional.

Consider for example the canonical surjection $ e_{\mathbb{Z}_p} : (\natty, =_\natty) \to \mathbb{Z}_p $
for $ p \neq_\natty 0 $.
Applying ttAC to $ \lambda n. (n + kp,\, \_) \typing \prod_{n \typing \natty} \sum_{m \typing\natty} m \sim_p n $,
where the obvious proof of $ n + kp \sim_p n $ is left implicit,
produces the operation $ i_k \coloneqq \lambda n. n + kp \typing \natty \fnty \natty $, for $ k \typing \natty$.
This operation is clearly not extensional:
for example, $ n \sim_p n + p $ but $ i_k(n) \neq_\natty i_k(n+p) $ for any $ k \typing \natty $.
%
%
%

For further details regarding the relation between ttAC and setoids we refer to~\cite{ML2006}.
\end{rem}

The full subcategory of \catstdt on free setoids is in fact equivalent to the E-category of small types \cattypet.
Its type of objects is the universe $ \ttunivm $,
the setoid of arrows $ X \to Y $ has $ X \fnty Y $ as underlying type,
and equivalence relation $ f \sim g $ given by the type
\[	\prod_{x \typing X} f(x) =_Y g(x).	\]

Recall that an E-functor $ F $ between two E-categories \catct and \catdt consists of
an operation $ F_o $ between the types of objects
together with an extensional function $ F_a^{X,X'} \colon Hom_\catcm(X,X') \to Hom_\catdm(F_o(X), F_o(X')) $
for each pair of objects $ X $ and $ X'$ of \catct,
satisfying the usual axioms.
There is an E-functor from \cattypet to \catstdt mapping each small type to the free setoid on it,
and each operation $ f \typing X \fnty Y $ to the corresponding function $ f \colon \frstd{X} \to \frstd{Y} $.
It is clearly fully faithful and its image is the full subcategory on free setoids.

\cattypet is \qcart:
a product of two objects $ X $ and $ Y $ is given by the (non-dependent) $\sum $-type $ X \prdty Y $,
and a weak equaliser of two arrows $ f,g \colon X \to Y $ is given
by the type $ \sum_{x \typing X} ( f(x) =_Y g(x) ) $ together with the first projection into $ X $.
In addition, the embedding $ X \mapsto \frstd{X} $ preserves all finite products:
since the identity type $ (x,y) =_{X \prdty Y} (x',y') $ is type-theoretically
equivalent to the type $ (x =_X x') \prdty (y =_{Y_0} y') $,
the free setoid on $ X \prdty Y $ is isomorphic to $ \frstd{X} \times \frstd{Y} $.

\begin{rem}
\label{rem:uip}
\cattypet has not arbitrary finite limits,
since their existence would imply the derivability of Uniqueness of Identity Proofs (UIP) in \ML for all small types.
Indeed, given a small type $ X \typing \ttunivm $,
the existence of an equaliser for every pair $ x,x' \colon \termm \to X $
would yield a mere equivalence relation $ E \typing X \fnty X \fnty \ttunivm $
(\ie an equivalence relation such that $ u =_{E(x,x')} v $ for every $ u,v : E(x,x') $ and $ x,x' : X $)
together with an element $ f \typing \Pi_{x,x' : X} E(x,x') \fnty x =_X x' $.
Theorem 7.2.2 in~\cite{HoTTbook} then implies UIP($ X $).
\end{rem}

We may collect what we have seen so far in the following proposition.

\begin{prop}
\catstdt is the exact completion of \cattypet as a \qcart E-category.
\end{prop}

We now verify that \cattypet satisfies the hypothesis of \cref{theor:cetcsmod}.
These are all either immediate or already known in one form or another,
except for closure for pseudo-relations which we prove first.

\begin{prop}
\label{prop:typrc}
\cattypet is closed for pseudo-relations.
\end{prop}
\begin{proof}
Let $ Y \overset{g}{\longrightarrow} X \overset{f}{\longrightarrow} I $ be arrows in \cattypet.
For $ i \typing I $ and $ x \typing X $ define
\[	f^-(i) \coloneqq \sum_{x \typing X} f(x) =_I i,	\qquad \text{and} \qquad	%
							g^-(x) \coloneqq \sum_{y \typing Y} g(y) =_X x,	\]
and form the types
\[	F \coloneqq \sum_{i \typing I} \prod_{u \typing f^-(i)} g^-(\proj_1(u))	\qquad \text{and} \qquad	%
						P \coloneqq \sum_{v \typing F} \sum_{x \typing X} f(x) =_I \phi(v)	\]
where $ \phi \coloneqq \proj_1 \colon F \to I $.
Finally, define $ \epsi \colon P \to Y $ and $ \alpha \colon P \to F \times X \times Y $ as
\[	\epsi(v,x,s) \coloneqq \proj_1 ( (\proj_2 v)(x,s) )	\qquad \text{and} \qquad	%
					\alpha(v,x,s) \coloneqq (v, x, \epsi(v,x,s)).		\]

If $ (v,x,y) \inar \alpha $, then there is $ s \typing f(x) =_I \phi(v) $ such that $ \epsi(v,x,s) =_Y y $ and
\[	\proj_2( (\proj_2 v)(x,s) ) \typing g(\epsi(v,x,s)) =_X x,	\]
so \ref{def:undepprod}\ref{comm} is satisfied,
while \ref{def:undepprod}\ref{exist} follows immediately from the definition of equality of arrows in \cattypet.
Hence the pair $ \phi, \alpha $ is a family of pseudo-relations over $ f,g $.

Let now $ r \colon R \to X \times Y $ be a pseudo-relation over $ f,g $ with domain index $ i \typing I $.
Property \ref{def:undepprod}\ref{exist} implies that
\[	\prod_{u \typing f^-(i)} \sum_{t \typing R} r_1(t) =_X \proj_1(u)	\]
is inhabited,
therefore the type-theoretic axiom of choice yields a function term $ k \typing f^-(i) \to R $ such that
\[	\prod_{u \typing f^-(i)} r_1(k(u)) =_X \proj_1(u).	\]
Property \ref{def:undepprod}\ref{comm} implies that there is a closed term
\[	m \typing \prod_{u \typing f^-(i)} g(r_2(k(u))) =_X \proj_1(u),	\]
Hence we can define a function term $ h \typing \prod_{f^-(i)} g^-(\proj_1(u)) $ as $ h(u) \coloneqq (r_2(k(u)),m(u)) $,
thus obtaining a term $ c \coloneqq (i,h) \typing F $.
Clearly $ \phi(c) =_I i $, we need to show that for all $ x \typing X $ and $ y \typing Y $
\[	(c,x,y) \inar \alpha	\ \limpld \	(x,y) \inar r.	\]

Suppose that there is $ s \typing f(x) =_I \phi(s) $ such that $ (c,x,s) \typing P $ and $ \epsi(c,x,s) =_Y y $,
hence
\[	y =_Y \epsi(c,x,s) =_Y \proj_1 ( h(x,s) ) =_Y r_2(k(x,s)).	\]
Since moreover $ r_1(k(x,s)) =_X \proj_1(x,s) =_X x $,
we can conclude $ (x,y) \inar r $ as required.
\end{proof}

We now briefly recall how to obtain the other properties.

The unit type $ \natty_1 $ gives the terminal object in \cattypet.
It is non-degenerate and indecomposable since the types
\[	(\natty_1 \fnty \natty_0) \fnty \natty_0	\quad \text{and} \quad	%
\prod_{u \typing X \sumty Y} \big ( \sum_{x \typing X} \injl(x) = u \sumty \sum_{y \typing Y} \injr(y) = u \big )	\]
are both inhabited.
Because of ttAC, all objects in \cattypet are choice objects.
\cref{lem:weqelem,theor:elemex} then imply that \cattypet and \catstdt are elemental, respectively.

\cattypet is weakly lextensive.
The initial object is given by the empty type $ \natty_0 $
and its elimination rule yields strictness.
Sums are given by sum types $ \sumty $,
and their disjointness follows from the type-theoretic equivalences
\begin{gather*}
	\injl(x) =_{X \sumty Y} \injl(x')	\ \simeq \	x =_X x',	\\
	\injr(y) =_{X \sumty Y} \injr(y')	\ \simeq \	y =_Y y',	\\
	\injl(x) =_{X \sumty Y} \injr(y)	\ \simeq \	\natty_0.
\end{gather*}
See for example~\cite{HoTTbook}.
For distributivity, it is enough to show that
the operation $ (X \prdty Y) \sumty (X \prdty Z) \fnty X \prdty (Y \sumty Z) $
defined by $\sumty$-elimination is injective and surjective,
which is straightforward using the elimination rules of the types involved.
The preservation of weak equalisers also follows from $\sumty$-elimination,
and the fact that a weak equaliser of $ f,g \colon X \to Y $ is logically equivalent over $ X $ to
$ \proj_1 \colon \sum_{x \typing X} f(x) =_Y g(x) \to X $.

The type of natural numbers $ \natty $ provides \cattypet with a \nno.
The existence of a universal arrow is an immediate consequence of the elimination rule of $ \natty $
(\ie recursion on natural numbers)
and, since the equality of arrows in \cattypet is point-wise propositional equality,
such an arrow is in fact unique.

In conclusion we have the following result.

\begin{prop}
\label{corol:stdcetcs}
\cattypet is a \qcart category with finite sums
that satisfies properties \ref{chob}--\ref{nno} from \cref{theor:cetcsmod}.
Hence \catstdt is a model of CETCS.
\end{prop}

\bibliographystyle{plain}
\bibliography{cetcs-ex-biblio}

\end{document}